\documentclass[11pt]{amsart}
\usepackage[a4paper,margin=1in]{geometry}
\usepackage{amsmath,amssymb,amsthm,mathtools}
\usepackage{microtype}
\usepackage{enumitem}
\usepackage{booktabs}
\usepackage{float}
\usepackage{tikz}
\usepackage{xurl}
\usepackage[colorlinks=true,citecolor=blue,linkcolor=blue,urlcolor=blue]{hyperref}

\newtheorem{theorem}{Theorem}[section]
\newtheorem{corollary}[theorem]{Corollary}
\newtheorem{lemma}[theorem]{Lemma}
\newtheorem{proposition}[theorem]{Proposition}
\theoremstyle{definition}
\newtheorem{definition}[theorem]{Definition}
\newtheorem{example}[theorem]{Example}

\theoremstyle{remark}
\newtheorem{remark}[theorem]{Remark}

\numberwithin{equation}{section}

\newcommand{\NN}{\mathcal N}
\newcommand{\DD}{\mathcal D}
\newcommand{\Ho}{\mathcal H_{\!o}}
\newcommand{\Cl}{\operatorname{Cl}}
\newcommand{\CN}{\operatorname{CN}}
\newcommand{\pd}{\operatorname{pdim}}
\newcommand{\depth}{\operatorname{depth}}
\newcommand{\reg}{\operatorname{reg}}
\newcommand{\mult}{\operatorname{e}}

\title[Neighborhood complexes with 2-linear Stanley--Reisner ideals]{A Complete Classification of 2-Linear Neighborhood Complexes}

\author{Mohammed Rafiq Namiq}
\address{Department of Mathematics, College of Science, University of Sulaimani, Sulaymaniyah, Kurdistan Region, Iraq}
\email{mohammed.namiq@univsul.edu.iq}
\date{}

\subjclass[2020]{Primary 13D02, 05E45; Secondary 05C05, 05C62, 13F55}
\keywords{Neighborhood complex, Stanley--Reisner ideal, linear resolution, hypertree, graded Betti number, cactus graph}

\hypersetup{
	pdftitle={A Complete Classification of 2-Linear Neighborhood Complexes},
	pdfauthor={Mohammed Rafiq Namiq},
	pdfsubject={Neighborhood complexes with 2-linear Stanley--Reisner ideals},
	pdfkeywords={
		neighborhood complex,
		Stanley--Reisner ideal,
		linear resolution,
		hypertree,
		graded Betti number,
		cactus graph
	}
}

\begin{document}
	
	\begin{abstract}
		Let \(G\) be a nonempty finite simple graph. We study when the
		Stanley--Reisner ideal of its neighborhood complex has a \(2\)-linear
		resolution. Combining Fr\"oberg's theorem with the classical hypertree criterion, we obtain the following equivalent description in graph terms: \(G\) is bipartite, its 
		indexed open neighborhoods are Helly, and every induced cycle of length at least eight has
		a filling from each color class. This class properly contains the chordal
		bipartite graphs without isolated vertices. Hochster's formula gives all
		squarefree multigraded Betti numbers, while face counts determine the complete
		graded Betti table. If \(G\) has \(n\) vertices and \(c\) connected
		components, then its Stanley--Reisner ring has terminal Betti number
		\(2c-1\), projective dimension \(n-1\), and depth one. We also determine
		the multiplicity and the initially Cohen--Macaulay and Cohen--Macaulay cases.
		A second formula separates degree data from overlaps caused by repeated common
		neighbors and yields closed expressions for bipartite graphs without
		\(K_{2,3}\), cactus graphs, pseudoforests, and forests. For square cactus
		graphs, the Betti table recovers every degree multiplicity at least three; for
		forests, it recovers the complete degree sequence. Finally, the dominance
		complex has a \(2\)-linear Stanley--Reisner ideal precisely for nontrivial
		stars.
	\end{abstract}
	
	\maketitle
	
	\section{Introduction}
	Let \(G=(V,E)\) be a nonempty finite simple graph. Its neighborhood
	complex is
	\[
	\NN(G)=
	\left\{
	A\subseteq V:
	\bigcap_{a\in A}N_G(a)\ne\varnothing
	\right\}.
	\]
	Lov\'asz introduced this complex in his proof of Kneser's conjecture
	\cite{Lovasz1978}. More recently, Fr\"oberg studied the Stanley--Reisner
	rings of neighborhood complexes, determined the \(2\)-linear resolutions
	for several graph families, and initiated a systematic investigation of
	the \(2\)-linear case \cite{Froberg2026}. We study the structure of the
	graphs \(G\) for which \(I_{\NN(G)}\) has a \(2\)-linear resolution and
	determine the corresponding homological invariants.

	The structural question is governed by the classical theory of hypertrees.
	Let
	\(\Ho(G)=(N_G(v))_{v\in V}\) be the indexed family of open neighborhoods,
	and let \(\CN(G)\) be the graph on \(V\) in which two vertices are adjacent
	when they have a common neighbor in \(G\). Identifying each index \(v\) with the
	indexed hyperedge \(N_G(v)\), we see that the line graph of \(\Ho(G)\) is
	\(\CN(G)\). Fr\"oberg's theorem and the classical criterion based on the Helly
	property and chordality therefore give
	\[
	I_{\NN(G)}\text{ has a \(2\)-linear resolution}
	\quad\Longleftrightarrow\quad
	\Ho(G)\text{ is a hypertree};
	\]
	see \cite{BrandstadtDraganChepoiVoloshin1998,Froberg1990}. This criterion
	has the following direct formulation in terms of \(G\): the
	ideal is \(2\)-linear exactly when \(G\) is bipartite, its indexed open
	neighborhoods are Helly, and every induced cycle of length at least eight has
	a filling from each color class. The proof identifies holes in
	the two half squares of a bipartite graph with induced cycles that do not
	admit a filling. It follows that the class properly contains the chordal bipartite 
	graphs without isolated vertices. For cactus graphs, the condition reduces to 
	the absence of isolated vertices and the requirement that every cycle have 
	length four.

	Under these equivalent structural conditions, set \(H=\CN(G)\). Then
	\(\NN(G)=\Cl(H)\) with \(H\) chordal. Hochster's
	formula determines every squarefree multigraded Betti number from the numbers
	of connected components of the induced subgraphs \(H[W]\). We also derive a
	closed formula for the total Betti numbers in terms of the numbers of vertex
	subsets having a common neighbor. In particular, if \(G\) has \(n\) vertices
	and \(c\) connected components, then
	\[
	\beta_{n-1,n}(\Bbbk[\NN(G)])=2c-1,
	\qquad
	\pd_S\Bbbk[\NN(G)]=n-1,
	\qquad
	\depth_S\Bbbk[\NN(G)]=1.
	\]
	These identities yield the initially Cohen--Macaulay and Cohen--Macaulay
	classifications and a multiplicity formula in terms of distinct maximum open
	neighborhoods.

	A second formula separates the contribution of the degree sequence from
	overlaps caused by repeated common neighbors. It gives explicit Betti
	formulas for
	bipartite graphs without \(K_{2,3}\), cactus graphs, pseudoforests, and
	forests.
	For square cactus graphs, the Betti table determines all degree
	multiplicities at least three, while the cycle count determines the remaining
	two. For forests, the Betti table determines the complete degree sequence. We
	also apply the closed neighborhood ideal to show that the dominance complex
	has a \(2\)-linear Stanley--Reisner ideal precisely for nontrivial stars.

	Section~\ref{sec:background} records the classical reduction and the basic
	conventions. Section~\ref{sec:structure} gives the reformulation in graph
	terms, and Section~\ref{sec:constructions} presents constructions and
	closure properties. Sections~\ref{sec:betti} determines the Betti numbers and
	the resulting properties of the Stanley--Reisner ring. Section~\ref{sec:overlap}
	introduces the overlap corrections, Section~\ref{sec:pseudoforests} treats
	cactus graphs, pseudoforest, and forests, and Section~\ref{sec:dominance} concerns dominance complexes.

\section{Preliminaries and the classical reduction}\label{sec:background}
	
	This section fixes the algebraic and combinatorial conventions and records the
	classical reduction to the indexed hypergraph of open neighborhoods.
	Throughout, \(G=(V,E)\) is a
	nonempty finite simple graph, \(\Bbbk\) is an arbitrary field, and
	\[
	S=\Bbbk[x_v:v\in V]
	\]
	is standard graded by \(\deg x_v=1\). Every simplicial complex is regarded as
	a complex on a fixed ground set, which may contain elements that belong to no
	face. We write
	\[
	\beta_{i,j}(M)=\dim_{\Bbbk}\operatorname{Tor}_i^S(M,\Bbbk)_j.
	\]
	For a finitely generated graded \(S\)-module \(M\), we use the conventions
	\[
	\reg(M)=\max\{j-i:\beta_{i,j}(M)\ne0\},
	\qquad
	\pd(M)=\max\{i:\beta_{i,j}(M)\ne0\text{ for some }j\}.
	\]
	The assertion that \(I_{\NN(G)}\) has a \(2\)-linear resolution means that
	it is generated in degree two and
	\[
	\beta_{i,j}(I_{\NN(G)})=0
	\qquad\text{for }j\ne i+2.
	\]
	We use \(\binom ab=0\) whenever \(b<0\) or \(b>a\).
	We write
	\[
	d(G)=\max_{v\in V}\deg_G(v)
	\]
	for the maximum degree of \(G\).
	
	\begin{definition}
		Let \(\Delta\) be a nonempty simplicial complex. Its \emph{initial
			dimension} is
		\[
		\operatorname{indim}\Delta
		=
		\min\{\dim F:F\text{ is a facet of }\Delta\}.
		\]
		For a nonzero finitely generated graded \(S\)-module \(M\), set
		\[
		\operatorname{indim} M
		=
		\min\{\dim(S/\mathfrak p):\mathfrak p\in\operatorname{Ass}(M)\}.
		\]
		Since a Stanley--Reisner ideal is radical, the associated primes of
		\(\Bbbk[\Delta]\) are its minimal primes, which correspond to the facets
		of \(\Delta\). Hence
		\[
		\operatorname{indim}\Bbbk[\Delta]
		=
		\operatorname{indim}\Delta+1.
		\]
		The complex \(\Delta\) is \emph{initially Cohen--Macaulay over
			\(\Bbbk\)} if
		\[
		\depth\Bbbk[\Delta]
		=
		\operatorname{indim}\Bbbk[\Delta],
		\]
		or, equivalently,
		\(\depth\Bbbk[\Delta]=\operatorname{indim}\Delta+1\);
		see \cite{NamiqInitiallyCM}.
	\end{definition}
	
	\begin{definition}
		The \emph{common neighbor graph} \(\CN(G)\) is the graph on \(V\) in which
		distinct vertices \(u,v\) are adjacent if and only if
		\[
		N_G(u)\cap N_G(v)\ne\varnothing.
		\]
		The open neighborhoods of \(G\) are \emph{Helly} if every pairwise
		intersecting indexed subfamily of \((N_G(v))_{v\in V}\) has nonempty total
		intersection. We use the standard Helly convention that singleton
		subfamilies are included; the terminology follows Groshaus and Szwarcfiter
		\cite{GroshausSzwarcfiter2008}.
	\end{definition}
	
	The singleton convention implies that a graph whose open neighborhoods form a
	Helly family has no isolated vertices. We write \(\Cl(H)\) for the clique
	complex of a graph \(H\).
	
	\begin{lemma}\label{lem:helly-flag}
		For a nonempty finite simple graph \(G\), the following are equivalent:
		\begin{enumerate}[label=\textup{(\roman*)}]
			\item the open neighborhoods of \(G\) form a Helly family;
			\item every nonempty clique \(Q\) of \(\CN(G)\) is contained in
			\(N_G(w)\) for some \(w\in V\);
			\item \(\NN(G)=\Cl(\CN(G))\);
			\item \(\NN(G)\) is flag on the ground set \(V\).
		\end{enumerate}
	\end{lemma}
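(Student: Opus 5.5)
We will establish the cycle of implications (i)\(\Rightarrow\)(ii)\(\Rightarrow\)(iii)\(\Rightarrow\)(iv)\(\Rightarrow\)(i). Two observations are used throughout. First, a set \(A\subseteq V\) lies in \(\NN(G)\) exactly when \(A\subseteq N_G(w)\) for some \(w\in V\): indeed \(w\in\bigcap_{a\in A}N_G(a)\) is the same as \(A\subseteq N_G(w)\). Second, the \(1\)-skeleton of \(\NN(G)\) is \(\CN(G)\), so \(\NN(G)\subseteq\Cl(\CN(G))\) always holds. We must keep track of the empty set and singletons, since the ground set is all of \(V\): a vertex \(v\) is a face of \(\NN(G)\) iff \(N_G(v)\ne\varnothing\), whereas every vertex is a face of \(\Cl(\CN(G))\).

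For (i)\(\Rightarrow\)(ii), take a nonempty clique \(Q\) of \(\CN(G)\). The subfamily \((N_G(q))_{q\in Q}\) is pairwise intersecting; for \(|Q|=1\) this uses the singleton convention, which means each \(N_G(q)\) is nonempty. By the Helly property there is some \(w\in\bigcap_{q\in Q}N_G(q)\), and then \(Q\subseteq N_G(w)\). For (ii)\(\Rightarrow\)(iii), the first observation shows that every nonempty clique lies in \(\NN(G)\). The empty set lies in both complexes, and the reverse inclusion was noted above, so the two complexes are equal. For (iii)\(\Rightarrow\)(iv), note that a clique complex is flag on its vertex set, and here every vertex of \(V\) is a face, so flagness holds on the ground set \(V\).

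For (iv)\(\Rightarrow\)(i), the delicate point is what flag on the ground set \(V\) means. We intend it to mean that \(\NN(G)\) equals the clique complex of its \(1\)-skeleton regarded as a graph on \(V\). This requires every \(v\in V\) to be a vertex of \(\NN(G)\), that is, \(N_G(v)\ne\varnothing\) for all \(v\), which is the singleton case of Helly. The \(1\)-skeleton is \(\CN(G)\). Now take a pairwise intersecting subfamily indexed by \(Q\). Then \(Q\) is a clique of \(\CN(G)\), or a singleton, which is still a face by the previous remark. Flagness gives \(Q\in\NN(G)\), so \(\bigcap_{q\in Q}N_G(q)\ne\varnothing\).

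The main obstacle is exactly this bookkeeping of singletons and isolated vertices. An isolated vertex \(v\) has \(\{v\}\notin\NN(G)\) but \(\{v\}\in\Cl(\CN(G))\). So (iii) fails for it, (i) fails by the singleton convention, and (iv) must be read as requiring \(\{v\}\) to be a face. Once the convention ``flag on the ground set \(V\)'' is fixed to include all singletons, each implication above is a one-line check.
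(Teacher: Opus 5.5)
Your proof is correct and uses the same ingredients as the paper. These are: the equivalence between \((N_G(q))_{q\in Q}\) being pairwise intersecting and \(Q\) being a clique of \(\CN(G)\); the identity \(\bigcap_{q\in Q}N_G(q)\ne\varnothing\iff Q\subseteq N_G(w)\) for some \(w\); the inclusion \(\NN(G)\subseteq\Cl(\CN(G))\); and the reading of flagness on \(V\) as equality with the clique complex of the \(1\)-skeleton on \(V\), under which an isolated vertex breaks (i), (iii), and (iv) at once. The only difference is organizational: the paper proves (i)\(\Leftrightarrow\)(ii), (ii)\(\Leftrightarrow\)(iii), and (iii)\(\Leftrightarrow\)(iv) separately, whereas you close a cycle of implications and check (iv)\(\Rightarrow\)(i) directly.
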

	
	\begin{proof}
		Let \(Q\subseteq V\) be nonempty. The indexed family
		\((N_G(q))_{q\in Q}\) is pairwise intersecting exactly when every two
		vertices of \(Q\) have a common neighbor, or equivalently, when \(Q\) is a
		clique of \(\CN(G)\). Moreover,
		\[
		\bigcap_{q\in Q}N_G(q)\ne\varnothing
		\quad\Longleftrightarrow\quad
		Q\subseteq N_G(w)\text{ for some }w\in V.
		\]
		Thus \textup{(i)} and \textup{(ii)} are equivalent.
		
		Every face of \(\NN(G)\) is contained in an open neighborhood. Any two
		vertices of such a face therefore have a common neighbor, and hence
		\[
		\NN(G)\subseteq\Cl(\CN(G)).
		\]
		Condition \textup{(ii)} gives the reverse inclusion, proving the equivalence
		of \textup{(ii)} and \textup{(iii)}.
		
		Finally, the edges of \(\CN(G)\) are precisely the faces of \(\NN(G)\)
		with two vertices. Therefore \(\NN(G)\) is flag on the prescribed ground set
		\(V\) if and only if it equals \(\Cl(\CN(G))\). This formulation also covers
		isolated vertices. If \(v\) is isolated in \(G\), then \(\{v\}\) is a
		clique of \(\CN(G)\) but not a face of \(\NN(G)\); consequently, both
		\textup{(iii)} and \textup{(iv)} fail.
	\end{proof}
	
	A hypergraph on \(V\) is a \emph{hypertree} if every hyperedge is nonempty and
	there exists a tree on \(V\) in which each hyperedge induces a subtree. We
	treat repeated open neighborhoods as indexed hyperedges. Passing to the family
	of distinct hyperedges preserves both the Helly property and subtree
	representability. Conversely, duplicating a nonempty hyperedge adds a true twin
	to the line graph and therefore preserves chordality. The classical hypertree
	criterion consequently applies to the indexed family without modification.
	
	\begin{proposition}\label{prop:background}
		For a nonempty finite simple graph \(G\), the following are equivalent:
		\begin{enumerate}[label=\textup{(\roman*)}]
			\item \(I_{\NN(G)}\) has a \(2\)-linear resolution;
			\item the open neighborhoods of \(G\) form a Helly family, and \(\CN(G)\) is chordal;
			\item \(\Ho(G)=(N_G(v))_{v\in V}\) is a hypertree.
		\end{enumerate}
		The conditions are independent of \(\operatorname{char}\Bbbk\), and any
		graph satisfying them has no isolated vertices.
	\end{proposition}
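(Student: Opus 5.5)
The plan is to prove (i)$\Leftrightarrow$(ii) using Fr\"oberg's theorem together with Lemma~\ref{lem:helly-flag}, and (ii)$\Leftrightarrow$(iii) using the classical hypertree criterion. For (i)$\Rightarrow$(ii), the first observation is that a $2$-linear resolution forces $I_{\NN(G)}$ to be generated in degree two. If $G$ had an isolated vertex $v$, then $x_v\in I_{\NN(G)}$ would be a minimal generator of degree one, since $\{v\}$ is a nonface. Hence $G$ has no isolated vertices, and every singleton is a face.

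Next, generation in degree two says that every minimal nonface has exactly two elements. This is precisely the statement that $\NN(G)$ is flag on the ground set $V$. By Lemma~\ref{lem:helly-flag}, the open neighborhoods are therefore Helly and $\NN(G)=\Cl(\CN(G))$. Then $I_{\NN(G)}$ is the edge ideal of the complement of $\CN(G)$. Fr\"oberg's theorem \cite{Froberg1990} says that the edge ideal of a graph has a linear resolution if and only if the complement of that graph is chordal. Applying it here, $\CN(G)$ is chordal.

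For (ii)$\Rightarrow$(i) the argument runs in reverse. Helly gives $\NN(G)=\Cl(\CN(G))$ by Lemma~\ref{lem:helly-flag}, so $I_{\NN(G)}$ is the edge ideal of $\overline{\CN(G)}$. Fr\"oberg's theorem with $\CN(G)$ chordal then gives a $2$-linear resolution. The ideal is nonzero, or else trivially fine; only the degenerate case where $\CN(G)$ is complete needs a brief comment.

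For (ii)$\Leftrightarrow$(iii) I would invoke the criterion in \cite{BrandstadtDraganChepoiVoloshin1998}: a hypergraph is a hypertree if and only if it is Helly and its line graph is chordal. This applies once we identify the line graph of $\Ho(G)$ with $\CN(G)$, indexing hyperedges by $v$. The point needing care is the use of indexed families. By the remark preceding the proposition, passing to distinct hyperedges preserves both the Helly property and subtree representability. Duplicating hyperedges adds true twins to the line graph, and true twins preserve chordality in both directions, because a hole never contains two true twins.

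Nonemptiness of hyperedges is equivalent to having no isolated vertices. Under (ii) this follows from the singleton Helly convention, and under (iii) it holds by the definition of a hypertree.

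Independence of the characteristic is immediate, since (ii) is purely combinatorial. The absence of isolated vertices was shown above. I expect the main obstacle to be bookkeeping rather than mathematics. One must make sure the hypertree criterion is applied to the indexed family, with its line graph literally equal to $\CN(G)$ including twins. One must also handle the isolated-vertex and ground-set conventions consistently, so that the degree-one generators $x_v$ are correctly excluded.
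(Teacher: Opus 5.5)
Your proposal is correct and follows essentially the same route as the paper: quadratic generation gives flagness, then Lemma~\ref{lem:helly-flag} and Fr\"oberg's theorem give (i)$\Leftrightarrow$(ii), and the Helly--chordal line-graph criterion applied to the indexed family gives (ii)$\Leftrightarrow$(iii). The degenerate case you flagged but left open never arises, and the paper dispatches it in one line: no vertex lies in its own open neighborhood, so \(V\notin\NN(G)\) and \(I_{\NN(G)}\ne0\); in particular, under the Helly condition \(\CN(G)\) is never complete.
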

	
	\begin{proof}
		The ideal \(I_{\NN(G)}\) is nonzero because no open neighborhood contains the
		full vertex set. Assume first that \textup{(i)} holds. Since a \(2\)-linear
		resolution forces \(I_{\NN(G)}\) to be generated in degree \(2\), the complex
		\(\NN(G)\) is flag on \(V\). Lemma~\ref{lem:helly-flag} then shows that the open neighborhoods of \(G\)
		form a Helly family and yields the equalities
		\[
		\NN(G)=\Cl(\CN(G)),
		\qquad
		I_{\NN(G)}=I\bigl(\overline{\CN(G)}\bigr).
		\]
		Fr\"oberg's theorem \cite[Theorem~1]{Froberg1990} now implies that
		\(\CN(G)\) is chordal. Hence \textup{(i)} implies \textup{(ii)}.
		
		Conversely, assume \textup{(ii)}. Lemma~\ref{lem:helly-flag} again yields
		\[
		I_{\NN(G)}=I\bigl(\overline{\CN(G)}\bigr).
		\]
		The right side is the edge ideal of the complement of the chordal graph
		\(\CN(G)\). Fr\"oberg's theorem therefore gives a \(2\)-linear resolution,
		proving the equivalence of \textup{(i)} and \textup{(ii)}.
		
		Under the identification of \(v\) with the indexed hyperedge \(N_G(v)\), the
		line graph of \(\Ho(G)\) is \(\CN(G)\). The classical characterization of hypertrees by the Helly property
		and chordality \cite{BrandstadtDraganChepoiVoloshin1998} proves the
		equivalence of \textup{(ii)} and \textup{(iii)}. All three conditions are
		combinatorial and are therefore independent of \(\operatorname{char}\Bbbk\).
		Finally, \textup{(ii)} excludes isolated vertices: for every \(v\in V\), the
		singleton indexed family \(\{N_G(v)\}\) must have nonempty intersection.
	\end{proof}

	Our convention for the ground set makes isolated vertices algebraically visible.
	The following observation records exactly how they alter the resolution.

	\begin{proposition}\label{prop:isolated-vertices}
		Let \(Z\) be the set of isolated vertices of \(G\), let \(t=|Z|\), and put
		\(G^{\circ}=G-Z\). Assume first that \(G^{\circ}\) is nonempty. With
		\[
		S^{\circ}=\Bbbk[x_v:v\in V(G^{\circ})],
		\qquad S=S^{\circ}[x_z:z\in Z],
		\]
		one has
		\[
		I_{\NN(G)}=I_{\NN(G^{\circ})}S+(x_z:z\in Z)
		\]
		and
		\begin{equation}\label{eq:isolated-convolution}
		\beta^{S}_{i,j}\bigl(\Bbbk[\NN(G)]\bigr)
		=
		\sum_{a=0}^{\min\{t,i\}}
		\binom ta\,
		\beta^{S^{\circ}}_{i-a,j-a}
		\bigl(\Bbbk[\NN(G^{\circ})]\bigr).
		\end{equation}
		If every vertex of \(G\) is isolated, then
		\(I_{\NN(G)}=(x_v:v\in V(G))\) and
		\(\beta^S_{i,i}=\binom{|V(G)|}{i}\).
	\end{proposition}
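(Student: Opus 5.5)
The plan is to first identify the complex, then read off the ideal, and finally obtain the Betti numbers from a tensor product. An isolated vertex \(z\) has \(N_G(z)=\varnothing\). It therefore lies in no face of \(\NN(G)\): the singleton \(\{z\}\) would need a common neighbor. Moreover, \(z\) belongs to no open neighborhood \(N_G(v)\). For a set \(A\subseteq V(G^{\circ})\) and a vertex \(w\), we have \(w\in N_G(a)\) for all \(a\in A\) iff \(w\in V(G^{\circ})\) and \(w\in N_{G^{\circ}}(a)\) for all \(a\). Hence the faces of \(\NN(G)\) are exactly the faces of \(\NN(G^{\circ})\), now viewed on the larger ground set \(V=V(G^{\circ})\sqcup Z\).

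Next we compare minimal nonfaces. The minimal nonfaces of \(\NN(G)\) are the singletons \(\{z\}\) with \(z\in Z\), together with the minimal nonfaces of \(\NN(G^{\circ})\) on \(V(G^{\circ})\). Indeed, any nonface meeting \(Z\) contains some \(\{z\}\), while a nonface inside \(V(G^{\circ})\) is a nonface of \(\NN(G^{\circ})\). This gives \(I_{\NN(G)}=I_{\NN(G^{\circ})}S+(x_z:z\in Z)\). Equivalently,
\[
\Bbbk[\NN(G)]\cong \Bbbk[\NN(G^{\circ})]\otimes_{\Bbbk}\Bbbk[x_z:z\in Z]/(x_z:z\in Z)
\]
as a module over \(S=S^{\circ}\otimes_\Bbbk \Bbbk[x_z]\).

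For the Betti numbers, tensor a minimal graded free resolution \(F_\bullet\) of \(\Bbbk[\NN(G^{\circ})]\) over \(S^{\circ}\) with the Koszul complex \(K_\bullet\) on the variables \(x_z\) over \(\Bbbk[x_z:z\in Z]\). The latter resolves \(\Bbbk\) minimally, and \(K_a=\Bbbk[x_z]^{\binom ta}(-a)\). Because the two factors involve disjoint sets of variables, \(F_\bullet\otimes_\Bbbk K_\bullet\) is a free \(S\)-resolution of the tensor product. Its homology is computed by the Künneth formula over \(\Bbbk\), and it is minimal because its differentials have entries in the maximal ideal. Collecting \(\beta_{i,j}=\sum_a \dim (K_a)_{\text{gens},a}\cdot\beta^{S^\circ}_{i-a,j-a}\) then yields \eqref{eq:isolated-convolution}, where the range \(0\le a\le\min\{t,i\}\) is automatic. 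Alternatively, one can argue with Tor: \(\operatorname{Tor}^S(M\otimes N,\Bbbk)\cong \operatorname{Tor}^{S^\circ}(M,\Bbbk)\otimes\operatorname{Tor}^{\Bbbk[x_z]}(N,\Bbbk)\).

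When every vertex is isolated, \(\NN(G)=\{\varnothing\}\) (or is void on \(V\)), so every singleton is a minimal nonface and \(I_{\NN(G)}=\mathfrak m\). The Koszul complex then gives \(\beta_{i,i}=\binom{|V|}{i}\). The main point needing care is really only the face identification: showing that no vertex of \(Z\) occurs in any neighborhood, so that faces over \(V(G^{\circ})\) coincide. The tensor product step is standard.
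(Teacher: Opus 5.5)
Your proposal is correct and follows the paper's proof essentially step for step. You identify the faces of \(\NN(G)\) with those of \(\NN(G^{\circ})\), read off the ideal, write \(\Bbbk[\NN(G)]\) as a tensor product over \(\Bbbk\), and tensor a minimal resolution with the Koszul complex on the \(x_z\); your Künneth and minimality remarks just fill in details the paper leaves implicit. The only slip is the parenthetical ``or is void on \(V\)'': under the paper's convention the empty set is a face, so \(\NN(G)=\{\varnothing\}\), and this is what gives \(I_{\NN(G)}=(x_v:v\in V(G))\) rather than the unit ideal.
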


	\begin{proof}
		No face of \(\NN(G)\) contains a vertex of \(Z\), while a subset of
		\(V(G^{\circ})\) is a face of \(\NN(G)\) exactly when it is a face of
		\(\NN(G^{\circ})\). This proves the ideal identity. Hence
		\[
		\Bbbk[\NN(G)]
		\cong
		\Bbbk[\NN(G^{\circ})]
		\otimes_{\Bbbk}
		\Bbbk[x_z:z\in Z]/(x_z:z\in Z).
		\]
		Tensoring a minimal \(S^{\circ}\)-resolution of the first factor with the
		Koszul resolution of the second factor gives a minimal \(S\)-resolution and
		yields \eqref{eq:isolated-convolution}. When all vertices are isolated, the resolution is the Koszul resolution of
		the residue field.
	\end{proof}
	
	\section{A structural reformulation in graph terms}\label{sec:structure}
	
	Proposition~\ref{prop:background} can be stated directly in terms of the
	cycles and neighborhoods of \(G\).
	
	Let \(G=(X\sqcup Y,E)\) be bipartite. Vertices in different color classes
	cannot have a common neighbor, and hence
	\begin{equation}\label{eq:half-square-decomposition}
		\CN(G)=\CN(G)[X]\sqcup\CN(G)[Y].
	\end{equation}
	The two summands are the half squares of \(G\) on \(X\) and on \(Y\)
	\cite{LeLe2019}.

	\begin{definition}
		Let
		\[
		C=x_1 y_1 x_2 y_2\cdots x_q y_q x_1,\qquad q\ge4,
		\]
		be an induced cycle of \(G=(X\sqcup Y,E)\), with \(x_i\in X\) and
		\(y_i\in Y\). A \emph{filling on \(X\)} of \(C\) is a common neighbor in
		\(G\) of two nonconsecutive vertices among \(x_1,\ldots,x_q\). A
		\emph{filling on \(Y\)} is defined symmetrically. The cycle is
		\emph{filled on both sides} if it has a filling on \(X\) and a filling on \(Y\).
	\end{definition}
	
	Because \(C\) is induced, every vertex that provides a filling lies outside
	\(V(C)\). A filling on \(X\) is provided by a vertex of \(Y\), whereas a
	filling on \(Y\) is provided by a vertex of \(X\). Thus the label records the
	color class of the cycle vertices that share the additional common neighbor.
	
	\begin{example}\label{ex:bifilled-cycle}
		Let $C=x_1y_1x_2y_2x_3y_3x_4y_4x_5y_5x_1$ be an induced \(10\)-cycle. If \(z_Y\in Y\) and \(z_X\in X\) satisfy
		\[
		N_G(z_Y)\cap V(C)=\{x_1,x_3,x_5\},
		\qquad
		N_G(z_X)\cap V(C)=\{y_1,y_3\},
		\]
		then \(z_Y\) provides a filling on \(X\), while \(z_X\) provides a filling on \(Y\).
		Hence \(C\) is filled on both sides. Figure~\ref{fig:bifilled-cycle} illustrates the two fillings.
		
		\begin{figure}[H]
			\centering
\begin{tikzpicture}[
				scale=0.7,
				xvertex/.style={circle,draw,minimum size=6.5mm,inner sep=1pt},
				yvertex/.style={rectangle,rounded corners=1.5pt,draw,
					minimum size=6.5mm,inner sep=1pt},
				cycleedge/.style={thick},
				fillingedge/.style={thick,densely dashed}
				]
				
				\begin{scope}[xshift=-3.7cm]
					\node[xvertex] (ax1) at (90:2.15) {$x_1$};
					\node[yvertex] (ay1) at (54:2.15) {$y_1$};
					\node[xvertex] (ax2) at (18:2.15) {$x_2$};
					\node[yvertex] (ay2) at (-18:2.15) {$y_2$};
					\node[xvertex] (ax3) at (-54:2.15) {$x_3$};
					\node[yvertex] (ay3) at (-90:2.15) {$y_3$};
					\node[xvertex] (ax4) at (-126:2.15) {$x_4$};
					\node[yvertex] (ay4) at (-162:2.15) {$y_4$};
					\node[xvertex] (ax5) at (162:2.15) {$x_5$};
					\node[yvertex] (ay5) at (126:2.15) {$y_5$};
					\node[yvertex] (azY) at (0,0) {$z_Y$};
					\draw[cycleedge]
					(ax1)--(ay1)--(ax2)--(ay2)--(ax3)--(ay3)--(ax4)--(ay4)--(ax5)--(ay5)--cycle;
					\draw[fillingedge] (azY)--(ax1);
					\draw[fillingedge] (azY)--(ax3);
					\draw[fillingedge] (azY)--(ax5);
					\node at (0,-3) {\textup{(a)} filling on \(X\) by \(z_Y\)};
				\end{scope}
				
				\begin{scope}[xshift=3.7cm]
					\node[xvertex] (bx1) at (90:2.15) {$x_1$};
					\node[yvertex] (by1) at (54:2.15) {$y_1$};
					\node[xvertex] (bx2) at (18:2.15) {$x_2$};
					\node[yvertex] (by2) at (-18:2.15) {$y_2$};
					\node[xvertex] (bx3) at (-54:2.15) {$x_3$};
					\node[yvertex] (by3) at (-90:2.15) {$y_3$};
					\node[xvertex] (bx4) at (-126:2.15) {$x_4$};
					\node[yvertex] (by4) at (-162:2.15) {$y_4$};
					\node[xvertex] (bx5) at (162:2.15) {$x_5$};
					\node[yvertex] (by5) at (126:2.15) {$y_5$};
					\node[xvertex] (bzX) at (0,0) {$z_X$};
					\draw[cycleedge]
					(bx1)--(by1)--(bx2)--(by2)--(bx3)--(by3)--(bx4)--(by4)--(bx5)--(by5)--cycle;
					\draw[fillingedge] (bzX)--(by1);
					\draw[fillingedge] (bzX)--(by3);
					\node at (0,-3){\textup{(b)} filling on \(Y\) by \(z_X\)};
				\end{scope}
			\end{tikzpicture}
			\caption{Solid edges form the induced cycle, and dashed edges show the two fillings}
			\label{fig:bifilled-cycle}
		\end{figure}
	\end{example}
	
	\begin{lemma}\label{lem:cycle-correspondence}
		Let \(G=(X\sqcup Y,E)\) be bipartite and \(q\ge4\).
		\begin{enumerate}[label=\textup{(\alph*)}]
			\item \(\CN(G)[X]\) contains an induced \(C_q\) if and only if \(G\)
			contains an induced \(C_{2q}\) with no filling on the \(X\) side.
			\item \(\CN(G)[Y]\) contains an induced \(C_q\) if and only if \(G\)
			contains an induced \(C_{2q}\) with no filling on the \(Y\) side.
		\end{enumerate}
	\end{lemma}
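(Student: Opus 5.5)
The plan is to prove part~(a) directly, by translating between the two kinds of cycle, and then to obtain part~(b) by exchanging the roles of \(X\) and \(Y\). The basic observation is this: two vertices \(x,x'\in X\) are adjacent in \(\CN(G)[X]\) exactly when they have a common neighbor in \(G\), and that neighbor necessarily lies in \(Y\). Both directions follow from this together with inducedness, and they need only \(q\ge4\).

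For the direction ``\(\Leftarrow\)'', let \(C=x_1y_1\cdots x_qy_qx_1\) be an induced \(C_{2q}\) with no filling on \(X\), and take indices of the \(x_i\) modulo \(q\). Consecutive vertices \(x_i,x_{i+1}\) share the neighbor \(y_i\), so they are adjacent in \(\CN(G)\). Now let \(x_i,x_j\) be nonconsecutive. A common neighbor of them in \(G\) cannot lie on \(C\), since \(C\) is induced and each \(y_k\) is adjacent on \(C\) only to \(x_k\) and \(x_{k+1}\). A common neighbor off \(C\) would be a filling on \(X\), which is excluded by hypothesis. Hence \(x_i\) and \(x_j\) are nonadjacent in \(\CN(G)\). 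The \(x_i\) are distinct vertices of \(C\), so \(\{x_1,\dots,x_q\}\) induces a \(C_q\) in \(\CN(G)[X]\).

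For the direction ``\(\Rightarrow\)'', let \(x_1x_2\cdots x_qx_1\) be an induced cycle in \(\CN(G)[X]\) with \(q\ge4\). For each \(i\), choose \(y_i\in N_G(x_i)\cap N_G(x_{i+1})\subseteq Y\). The key claim is
\[
N_G(y_i)\cap\{x_1,\dots,x_q\}=\{x_i,x_{i+1}\}.
\]
To see this, suppose \(y_i\) were adjacent to some \(x_j\) with \(j\notin\{i,i+1\}\). Then \(x_j\) would be adjacent in \(\CN(G)\) to both \(x_i\) and \(x_{i+1}\). In an induced cycle of length \(q\ge4\), no vertex is adjacent to both ends of an edge, so this is impossible.

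The claim gives the remaining properties. First, the \(y_i\) are distinct: if \(y_i=y_k\) with \(k\ne i\), then \(\{x_k,x_{k+1}\}\ne\{x_i,x_{i+1}\}\) because \(q\ge4\), contradicting the claim. Hence \(x_1y_1x_2\cdots x_qy_qx_1\) is a cycle of length \(2q\). It is induced: \(G\) is bipartite, so the only possible chords join some \(x_j\) to some \(y_i\), and the claim rules these out. Finally, the cycle has no filling on \(X\), because a common neighbor of nonconsecutive \(x_i,x_j\) would make them adjacent in \(\CN(G)\), contradicting inducedness of the original \(C_q\). Part~(b) follows by the symmetric argument.

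The only delicate point is the claim. It is where \(q\ge4\) is used, since for \(q=3\) a single \(y\) adjacent to all three vertices defeats the construction. It is also the step that simultaneously yields distinctness of the \(y_i\) and inducedness of the \(2q\)-cycle.
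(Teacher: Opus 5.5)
Your proof is correct and follows the paper's argument. In the converse direction, both use the fact that consecutive $x_i$ share $y_i$ while nonconsecutive ones have no common neighbor. In the forward direction, both choose $y_i\in N_G(x_i)\cap N_G(x_{i+1})$ and note that if $y_i$ were adjacent to a third $x_j$, it would create a chord in the induced $C_q$. You package this as the single claim $N_G(y_i)\cap\{x_1,\dots,x_q\}=\{x_i,x_{i+1}\}$ and derive both distinctness and inducedness from it, which is a slightly tidier version of the paper's ``the same argument shows'' step.
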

	
	\begin{proof}
		It is enough to prove \textup{(a)}, since interchanging \(X\) and \(Y\)
		gives \textup{(b)}.
		
		Suppose first that \(x_1,\ldots,x_q\) induce a cycle in \(\CN(G)[X]\),
		with indices read modulo \(q\). For each \(i\), select
		\[
		y_i\in N_G(x_i)\cap N_G(x_{i+1}).
		\]
		The selected vertices \(y_1,\ldots,y_q\) are distinct. If
		\(y_i=y_j\) for \(i\ne j\), then this vertex is adjacent to the endpoints of
		two different edges of the \(x\)-cycle. It would therefore create an edge of
		the common neighbor graph between two nonconsecutive vertices of the induced
		cycle. The same argument shows that, among the selected \(y\)-vertices,
		\(x_i\) is adjacent only to \(y_{i-1}\) and \(y_i\). Consequently,
		\[
		x_1y_1x_2y_2\cdots x_qy_qx_1
		\]
		is an induced \(C_{2q}\) in \(G\). This cycle has no filling on the \(X\)
		side, since a common neighbor of two nonconsecutive \(x_i\)'s would produce a
		chord of the
		induced cycle in \(\CN(G)[X]\).
		
		Conversely, suppose that
		\[
		x_1y_1x_2y_2\cdots x_qy_qx_1
		\]
		is an induced \(C_{2q}\) with no filling on the \(X\) side. Each consecutive pair
		\(x_i,x_{i+1}\) has the common neighbor \(y_i\), whereas no nonconsecutive
		pair of vertices in \(X\) on the cycle has a common neighbor. Hence
		\(x_1,\ldots,x_q\) induce a \(C_q\) in \(\CN(G)[X]\).
	\end{proof}
	
	\begin{proposition}\label{prop:cycle-filling}
		For a bipartite graph \(G=(X\sqcup Y,E)\), the graph \(\CN(G)\) is chordal
		if and only if every induced cycle of \(G\) of length at least eight is
		filled on both sides.
	\end{proposition}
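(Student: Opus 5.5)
The plan is to reduce chordality of \(\CN(G)\) to chordality of its two half squares, and then translate holes into unfilled cycles of \(G\) using Lemma~\ref{lem:cycle-correspondence}. By the decomposition \eqref{eq:half-square-decomposition}, \(\CN(G)\) is the disjoint union of \(\CN(G)[X]\) and \(\CN(G)[Y]\), with no edges between them. An induced cycle lies in a connected subgraph, so every induced cycle of \(\CN(G)\) lies entirely in one of the two summands. Hence \(\CN(G)\) is chordal if and only if neither \(\CN(G)[X]\) nor \(\CN(G)[Y]\) contains an induced \(C_q\) with \(q\ge4\).

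For the forward direction I argue by contrapositive. Suppose \(G\) has an induced cycle of length at least eight that is not filled on both sides. Since \(G\) is bipartite, the cycle has even length \(2q\) with \(q\ge4\), and it alternates between \(X\) and \(Y\). It can therefore be written as \(x_1y_1\cdots x_qy_qx_1\), so it matches the form in the definition of a filling. By assumption it lacks a filling on \(X\) or a filling on \(Y\). Lemma~\ref{lem:cycle-correspondence}(a) or (b) then produces an induced \(C_q\), \(q\ge4\), in \(\CN(G)[X]\) or in \(\CN(G)[Y]\). Thus \(\CN(G)\) is not chordal.

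For the converse, again by contrapositive, suppose \(\CN(G)\) is not chordal. By the first paragraph, some half square, say \(\CN(G)[X]\), contains an induced \(C_q\) with \(q\ge4\). Lemma~\ref{lem:cycle-correspondence}(a) gives an induced \(C_{2q}\) in \(G\) with no filling on the \(X\) side. Its length is \(2q\ge8\), and it is not filled on both sides. The case of \(Y\) is symmetric, using part (b).

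There is no real obstacle, since Lemma~\ref{lem:cycle-correspondence} does the work. The only points to state carefully are these. First, the lemma requires \(q\ge4\), and this matches exactly the length bound \(2q\ge8\) in the statement. Second, induced cycles of lengths four and six impose no condition: a hole in a half square needs \(q\ge4\), so such cycles never correspond to one. Third, an induced cycle of a disjoint union lies in one component of the union.
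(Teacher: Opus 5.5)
Your proposal is correct and follows the same route as the paper. Both arguments use the half-square decomposition to reduce chordality of \(\CN(G)\) to the absence of holes in \(\CN(G)[X]\) and \(\CN(G)[Y]\), then apply Lemma~\ref{lem:cycle-correspondence} in both directions, with \(q\ge4\) matching the length bound \(2q\ge8\).
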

	
	\begin{proof}
		By \eqref{eq:half-square-decomposition}, \(\CN(G)\) is chordal exactly when
		both half squares, \(\CN(G)[X]\) and \(\CN(G)[Y]\), are chordal.
		Lemma~\ref{lem:cycle-correspondence} shows that a hole \(C_q\), \(q\ge4\),
		in the half square on \(X\) corresponds to an induced \(C_{2q}\) in \(G\) with
		no filling on the \(X\) side. The symmetric statement holds for the half
		square on \(Y\). Thus neither half square contains a hole if and only if every
		induced cycle of
		\(G\) of length at least eight has a filling on \(X\) and a filling on
		\(Y\).
	\end{proof}
	
	Proposition~\ref{prop:cycle-filling} expresses the \(X\)-chordal and
	\(Y\)-chordal conditions in terms of half squares; see
	\cite[Theorems~6--7]{BrandstadtDraganChepoiVoloshin1998}. In that paper,
	the side label refers to the color class containing the bridge
	vertex. Here,
	a filling is labeled instead by the color class of the two cycle vertices that
	share that bridge vertex as a common neighbor. The following theorem therefore
	combines that classical bipartite hypertree characterization with
	Proposition~\ref{prop:background}.
	
	\begin{theorem}\label{thm:structural}
		For a nonempty finite simple graph \(G\), the following are equivalent:
		\begin{enumerate}[label=\textup{(\roman*)}]
			\item \(I_{\NN(G)}\) has a \(2\)-linear resolution over \(\Bbbk\);
			\item \(G\) is bipartite, its open neighborhoods form a Helly family, and
			every induced cycle of length at least eight is filled on both sides.
		\end{enumerate}
	\end{theorem}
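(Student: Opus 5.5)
The plan is to reduce Theorem~\ref{thm:structural} to Proposition~\ref{prop:background} and Proposition~\ref{prop:cycle-filling}. By Proposition~\ref{prop:background}, condition \textup{(i)} is equivalent to: the open neighborhoods of \(G\) are Helly and \(\CN(G)\) is chordal.

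\emph{From \textup{(ii)} to \textup{(i)}.} If \(G\) is bipartite and every induced cycle of length at least eight is filled on both sides, then Proposition~\ref{prop:cycle-filling} shows that \(\CN(G)\) is chordal. Together with the Helly hypothesis, Proposition~\ref{prop:background} gives \textup{(i)}.

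\emph{From \textup{(i)} to \textup{(ii)}.} Assume the Helly property and chordality of \(\CN(G)\). Once we know that \(G\) is bipartite, Proposition~\ref{prop:cycle-filling} yields the filling condition, and \textup{(ii)} follows. So the only real content, and the step I expect to be the main obstacle, is showing that Helly plus chordality of \(\CN(G)\) forces \(G\) to be bipartite. Suppose \(G\) is not bipartite, and let \(C=v_0v_1\cdots v_{2k}v_0\) be a shortest odd cycle. By minimality, \(C\) is an induced cycle. I would split into two cases.

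\emph{Case \(k=1\): \(G\) has a triangle.} Choose a maximal clique \(K\) of \(G\) containing this triangle. Any two vertices of \(K\) have a common neighbor, namely a third vertex of \(K\), since \(|K|\ge3\). Hence the family \((N_G(v))_{v\in K}\) is pairwise intersecting. The Helly property then gives a vertex \(w\) adjacent to every vertex of \(K\). Since \(G\) has no loops, \(w\notin K\), so \(K\cup\{w\}\) is a larger clique, contradicting maximality.

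\emph{Case \(k\ge2\): the shortest odd cycle has length \(2k+1\ge5\).} I claim that in \(\CN(G)\) the vertices of \(C\) induce the cycle \(v_0v_2v_4\cdots\), which has length \(2k+1\ge5\). This is a hole, contradicting chordality.

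\begin{itemize}
\item Common neighbors lying on \(C\): since \(C\) is induced of length at least \(5\), two cycle vertices have a common neighbor on \(C\) exactly when their cyclic distance is \(2\).
\item Common neighbors off \(C\): suppose \(w\notin V(C)\) is adjacent to \(v_i\) and \(v_j\), and let \(d\le k\) be their cyclic distance. Then \(w\) together with the two arcs of \(C\) produces cycles of lengths \(d+2\) and \(2k+3-d\). Exactly one of these is odd.
  \begin{itemize}
  \item If \(d\) is odd, minimality forces \(d+2\ge2k+1\). This is impossible, since \(d\le k\) and \(k\ge2\).
  \item If \(d\) is even, minimality forces \(2k+3-d\ge2k+1\), so \(d\le2\). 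Since \(d\) is even and positive, \(d=2\).
  \end{itemize}
\end{itemize}

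Thus every common neighbor, whether on or off \(C\), only joins cycle vertices at cyclic distance exactly \(2\). So \(\CN(G)[V(C)]\) is precisely the "distance-two" graph of \(C\). For odd length \(2k+1\), this graph is a single cycle of length \(2k+1\ge5\), which is the desired hole. Hence \(G\) is bipartite, completing the proof.
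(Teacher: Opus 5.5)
Your proof is correct and follows essentially the same route as the paper. You reduce to Proposition~\ref{prop:background} and Proposition~\ref{prop:cycle-filling}, rule out triangles by extending to a maximal clique and applying the Helly property, and show that a shortest odd cycle of length at least five produces a hole $v_0v_2v_4\cdots$ in $\CN(G)$ via a parity argument on an external common neighbor; the only difference is bookkeeping, since you use the cyclic distance $d\le k$ and its parity while the paper uses the length $\ell$ of the odd arc and shows $3\le\ell\le m-4$.
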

	
	\begin{proof}
		Assume first that \textup{(i)} holds. By
		Proposition~\ref{prop:background}, the open neighborhoods of \(G\) form a
		Helly family and \(\CN(G)\) is chordal. We show that \(G\) is bipartite.
		
		First, \(G\) is triangle free. If \(G\) contained a triangle, it could be
		extended to a maximal clique \(Q\). Since \(|Q|\ge3\), every two vertices
		of \(Q\) have a third vertex of \(Q\) as a common neighbor. Hence \(Q\) is a
		clique of \(\CN(G)\). Lemma~\ref{lem:helly-flag} implies that \(Q\) is a face
		of \(\NN(G)\), so there exists \(w\in V(G)\) such that
		\[
		Q\subseteq N_G(w).
		\]
		Because \(G\) has no loops, \(w\notin Q\). Therefore \(Q\cup\{w\}\) is a
		clique of \(G\), contradicting the maximality of \(Q\). Hence \(G\) is triangle free.
		
		Suppose, toward a contradiction, that \(G\) is not bipartite. Let
		\[
		C=v_0v_1\cdots v_{m-1}v_0
		\]
		be a shortest odd cycle.
		The absence of triangles gives \(m\ge5\), and the minimality of \(m\)
		implies that \(C\) is induced. Since \(m\) is odd, multiplication by \(2\)
		permutes the residue classes modulo \(m\). The cyclic order
		\[
		v_0,v_2,v_4,\ldots,v_{2(m-1)},v_0,
		\]
		with indices taken modulo \(m\), therefore visits every vertex of \(C\).
		Consecutive vertices in this order are at distance two on \(C\), so they
		have a common neighbor on \(C\). They consequently form an \(m\)-cycle in
		\(\CN(G)\).
		
		We claim that this cycle is induced. Suppose that two vertices \(v_i\) and
		\(v_j\), nonconsecutive in this order, have a common neighbor
		\(w\). If \(w\in V(C)\), then the inducedness of \(C\) forces \(v_i\) and
		\(v_j\) to be the two neighbors of \(w\) on \(C\). They would then be
		consecutive in this order, a contradiction. Hence
		\(w\notin V(C)\).
		
		Exactly one of the two \(v_i\)--\(v_j\) arcs of \(C\) has odd length. Let
		\(\ell\) denote its length. If \(\ell=1\), then \(v_i,v_j,w\) form a triangle.
		If \(\ell=m-2\), then the complementary arc has length two, so \(v_i\) and
		\(v_j\) are consecutive in this order. Both cases are impossible,
		and therefore
		\[
		3\le\ell\le m-4.
		\]
		The odd arc, together with the edges \(v_iw\) and \(wv_j\), forms an odd
		cycle of length \(\ell+2<m\). This contradicts the choice of \(C\). Thus the
		cycle obtained from this order is induced and has length \(m\ge5\) in
		\(\CN(G)\),
		contrary to chordality. Hence \(G\) is bipartite.
		
		Proposition~\ref{prop:cycle-filling} now implies that every induced cycle of
		\(G\) of length at least eight is filled on both sides. Together with the
		Helly property of the open neighborhoods, this proves
		\textup{(ii)}.
		
		Conversely, assume \textup{(ii)}. By Proposition~\ref{prop:cycle-filling},
		the condition on induced cycles implies that \(\CN(G)\) is chordal. Since
		the open neighborhoods of \(G\) also form a Helly family,
		Proposition~\ref{prop:background} shows that \(I_{\NN(G)}\) has a \(2\)-linear resolution. Therefore
		\textup{(i)} holds.
	\end{proof}
	
	\begin{remark}
		A bipartite graph is \emph{chordal bipartite} if it has no induced cycle of
		length at least six. An induced \(C_6\) gives a triangle, rather than a
		hole, in each half square. The three vertices in either color class also
		determine pairwise intersecting open neighborhoods. If the open
		neighborhoods of \(G\) form a Helly family, each triple therefore
		has a common neighbor outside
		the cycle. Thus chordality of the half squares alone does not exclude an induced
		\(C_6\); the Helly condition in Theorem~\ref{thm:structural} is essential.
	\end{remark}
	
	\begin{corollary}\label{cor:chordal-bipartite}
		If \(G\) is a chordal bipartite graph without isolated vertices, then
		\(I_{\NN(G)}\) has a \(2\)-linear resolution.
	\end{corollary}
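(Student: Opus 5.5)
We verify condition (ii) of Theorem~\ref{thm:structural} for a chordal bipartite graph \(G=(X\sqcup Y,E)\) with no isolated vertices. Two of the three requirements are immediate. \(G\) is bipartite by definition, and it has no induced cycle of length at least six. In particular it has no induced cycle of length at least eight, so the filling condition holds vacuously. What remains is the Helly property of the indexed open neighborhoods, and we expect this to be the main obstacle.

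For the singleton subfamilies, we use the absence of isolated vertices: each \(N_G(v)\) is nonempty. Now take a pairwise intersecting subfamily \((N_G(q))_{q\in Q}\) with \(|Q|\ge2\). Vertices in different color classes have no common neighbor, so \(Q\) lies in one class, say \(Q\subseteq X\). We must find \(w\in Y\) adjacent to all of \(Q\).

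We would argue by induction on \(|Q|\). For \(|Q|=2\) the claim is the hypothesis, so the first real case is \(|Q|=3\). Let \(Q=\{a,b,c\}\) and pick \(y_{ab},y_{bc},y_{ca}\in Y\) witnessing the pairwise intersections. If some witness is adjacent to all three vertices of \(Q\), we are done. Otherwise the witnesses are distinct, and \(y_{ab}\) is not adjacent to \(c\), and so on. Then \(a\,y_{ab}\,b\,y_{bc}\,c\,y_{ca}\,a\) is a \(6\)-cycle. Because \(G\) is bipartite, the only possible chords join an \(X\)-vertex to a \(Y\)-vertex, that is, \(c\,y_{ab}\), \(a\,y_{bc}\) or \(b\,y_{ca}\), and all three are excluded. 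So the cycle is induced, which contradicts chordal bipartiteness.

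For \(|Q|\ge4\) we plan to use the fact that the open neighborhoods of a chordal bipartite graph on one side have the Helly property for triples, and then upgrade this to full Helly. The cleanest route is to show \(2\)-Helly together with triangle-Helly. Alternatively, we can run a direct induction. Take \(q_1,q_2\in Q\). By induction there are \(w_1\) adjacent to \(Q\setminus\{q_1\}\) and \(w_2\) adjacent to \(Q\setminus\{q_2\}\). If neither works, then \(w_1\not\sim q_1\) and \(w_2\not\sim q_2\). Choose \(r\in Q\setminus\{q_1,q_2\}\), which is adjacent to both \(w_1\) and \(w_2\), and choose \(y\) adjacent to \(q_1\) and \(q_2\). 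If \(y\) is adjacent to all of \(Q\) we are done; otherwise we look for an induced \(C_6\) among \(q_1,w_2,r,w_1,q_2,y\), possibly after replacing \(r\), and need to exclude its chords. That is the delicate point.

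If the cycle \(q_1\,w_2\,r\,w_1\,q_2\,y\,q_1\) has a chord \(y\,r\), we replace the triple by \(\{q_1,q_2,r\}\), which already has the common neighbor \(y\), and iterate. The cleaner alternative is to cite the known fact that chordal bipartite graphs are exactly the bipartite graphs whose two half squares are chordal with Helly neighborhoods, equivalently \(\Ho(G)\) is a hypertree \cite{BrandstadtDraganChepoiVoloshin1998}. We would first try that citation, possibly combined with the triple argument and the classical result that a family is Helly whenever every triple of pairwise intersecting members in a chordal line-graph setting has a common point. Once Helly is established, Theorem~\ref{thm:structural} gives the \(2\)-linear resolution.
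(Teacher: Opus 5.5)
Your reduction to the Helly property and your treatment of \(|Q|=3\) are correct and agree with the paper. The gap is the case \(|Q|\ge4\). There you leave the possible chord \(yr\) unresolved, and the proposed repair (``replace the triple by \(\{q_1,q_2,r\}\) and iterate'') has no progress measure. It never produces either a common neighbor of \(Q\) or a contradiction.

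The missing observation is that \(r\) should be chosen \emph{after} \(y\). Since \(y\) is adjacent to \(q_1\) and \(q_2\) but, by assumption, not to all of \(Q\), there is \(r\in Q\setminus\{q_1,q_2\}\) nonadjacent to \(y\). Any such \(r\) is adjacent to \(w_1\) and \(w_2\), because \(w_1\) covers \(Q\setminus\{q_1\}\) and \(w_2\) covers \(Q\setminus\{q_2\}\). The vertices \(w_1,w_2,y\) are distinct: \(q_1\) is adjacent to \(w_2\) and \(y\) but not to \(w_1\), and \(q_2\) is adjacent to \(y\) but not to \(w_2\). The only candidate chords \(q_1w_1\), \(q_2w_2\), \(ry\) are absent, so \(q_1\,w_2\,r\,w_1\,q_2\,y\,q_1\) is an induced \(C_6\). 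With this choice your induction is complete.

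The paper does the same thing without induction. It takes a minimal subfamily \(Q'\) with empty intersection, three members \(x_1,x_2,x_3\), and witnesses \(y_i\in\bigcap_{x\in Q'\setminus\{x_i\}}N_G(x)\setminus N_G(x_i)\). These yield the induced cycle \(x_1y_2x_3y_1x_2y_3x_1\).

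Neither of your fallbacks closes the gap.
\begin{itemize}
\item The characterization ``chordal bipartite exactly when \(\Ho(G)\) is a hypertree'' is false as an equivalence. Proposition~\ref{prop:two-sided-cone} with \(H=C_{2q}\), \(q\ge3\), gives a graph with an induced \(C_{2q}\) whose open neighborhoods form a hypertree. Only the forward implication is classical, through totally balanced hypergraphs. Citing it is legitimate, but it amounts to citing the Helly property you are trying to prove.
\item The principle that triple-Helly together with a chordal line graph implies Helly is also false. The four \(3\)-element subsets of a \(4\)-element set are pairwise intersecting, every three of them meet, and their line graph is \(K_4\). Yet their total intersection is empty. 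Realized as the neighborhoods of four vertices of \(X\), this family contains an induced \(C_6\).
\end{itemize}
So it is the absence of induced \(C_6\)'s, applied at every size of \(Q\), that must carry the argument, not a general Helly principle.
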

	
	\begin{proof}
		A chordal bipartite graph has no induced cycle of length at least six, so the
		condition on induced cycles in Theorem~\ref{thm:structural} is vacuous. It
		remains to prove that the open neighborhoods form a Helly family.
		
		Let \((N_G(x))_{x\in Q}\) be a pairwise intersecting indexed subfamily with at
		least two members. Since \(G\) is bipartite, all vertices of \(Q\) lie in the
		same color class. After interchanging the color classes if necessary, assume
		that \(Q\subseteq X\). If the total intersection were empty, there would be a subfamily indexed by
		\(Q'\subseteq Q\) that is minimal among those with empty intersection. Then
		\(|Q'|\ge3\), so there exist distinct \(x_1,x_2,x_3\in Q'\). By minimality, for each \(i\in\{1,2,3\}\) there
		exists
		\[
		y_i\in
		\bigcap_{x\in Q'\setminus\{x_i\}}N_G(x)
		\setminus N_G(x_i).
		\]
		The vertices \(y_1,y_2,y_3\) are distinct. Among the six selected vertices,
		\(y_i\) is adjacent to the two vertices \(x_j\) with \(j\ne i\), but not to
		\(x_i\). Hence
		\[
		x_1y_2x_3y_1x_2y_3x_1
		\]
		is an induced \(C_6\), contradicting chordal bipartiteness. Every pairwise
		intersecting subfamily with at least two members therefore has nonempty total
		intersection. Singleton subfamilies also intersect because \(G\) has no
		isolated vertices. Thus the open neighborhoods of \(G\) form a Helly family, and
		Theorem~\ref{thm:structural} gives the conclusion.
	\end{proof}
	
	\begin{remark}
		Corollary~\ref{cor:chordal-bipartite} applies, in particular, to every
		forest without isolated vertices and every chain graph without isolated
		vertices. It also recovers the \(2\)-linearity assertions for paths,
		stars, complete bipartite graphs, and \(2\times n\) grids obtained in
		\cite[Theorems~3.2, 3.4, 3.8, and 3.12]{Froberg2026}.
	\end{remark}

	A graph is a \emph{cactus graph} if any two cycles have at most one vertex in
	common. The structural criterion gives a particularly simple characterization
	within this class.

	\begin{theorem}\label{thm:cactus-characterization}
		Let \(G\) be a nonempty finite simple cactus graph. Then
		\(I_{\NN(G)}\) has a \(2\)-linear resolution if and only if \(G\) has no
		isolated vertices and every cycle of \(G\) has length four.
	\end{theorem}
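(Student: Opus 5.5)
We prove both directions through Theorem~\ref{thm:structural}, using the cactus hypothesis to simplify its three conditions. Since any two cycles of a cactus share at most one vertex, no two distinct cycles can share an edge. This fact drives both directions.

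For the forward direction, assume \(I_{\NN(G)}\) has a \(2\)-linear resolution. Proposition~\ref{prop:background} gives that \(G\) has no isolated vertices, and Theorem~\ref{thm:structural} gives that \(G\) is bipartite, its open neighborhoods are Helly, and every induced cycle of length at least eight is filled on both sides. Every cycle of a cactus is induced: a chord would split the cycle into two shorter cycles sharing the chord, hence sharing two vertices. So every cycle has even length \(2q\), and it remains to exclude \(2q=6\) and \(2q\ge8\).
\begin{itemize}
\item If \(C=x_1y_1x_2y_2x_3y_3x_1\) is a \(6\)-cycle, then \(N_G(x_1),N_G(x_2),N_G(x_3)\) pairwise intersect. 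By Helly there is a common neighbor \(w\) of \(x_1,x_2,x_3\). Since \(C\) is induced, \(w\notin V(C)\). Then \(x_1wx_2y_1x_1\) is a \(4\)-cycle sharing the edge \(x_1y_1\) with \(C\), a contradiction.
\item If \(2q\ge8\), a filling on \(X\) is a vertex \(w\notin V(C)\) adjacent to two nonconsecutive \(x_i,x_j\). The path \(x_iwx_j\) together with either arc of \(C\) between \(x_i\) and \(x_j\) gives a cycle sharing at least two vertices with \(C\), again a contradiction.
\end{itemize}
Hence every cycle has length four.

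For the converse, assume \(G\) has no isolated vertices and every cycle is a \(4\)-cycle. Then \(G\) has no odd cycles, so it is bipartite, and the condition on induced cycles of length at least eight is vacuous. Moreover every induced cycle has length four, so \(G\) is chordal bipartite. Corollary~\ref{cor:chordal-bipartite} then finishes the proof, and in particular supplies the Helly property directly.

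The main point to check carefully is the forward direction, specifically that a filling vertex or a Helly vertex \(w\) really produces a second cycle meeting \(C\) in at least two vertices. This is immediate because \(w\) lies off \(C\) and is adjacent to two distinct vertices of \(C\). Everything else is a direct application of the earlier results.
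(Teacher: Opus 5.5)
Your proof is correct and follows essentially the same route as the paper. Both directions go through Theorem~\ref{thm:structural}, with Proposition~\ref{prop:background} for isolated vertices; \(6\)-cycles are excluded via the Helly vertex and cycles of length at least eight via a filling vertex, each producing a second cycle that meets \(C\) in at least two vertices; and the converse reduces to Corollary~\ref{cor:chordal-bipartite}. The only difference is cosmetic: in the \(6\)-cycle case you exhibit the explicit \(4\)-cycle \(x_1wx_2y_1x_1\), whereas the paper uses its general observation that no vertex off a cycle can have two neighbors on it.
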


	\begin{proof}
		Suppose first that \(I_{\NN(G)}\) has a \(2\)-linear resolution.
		Proposition~\ref{prop:background} shows that \(G\) has no isolated vertices,
		and Theorem~\ref{thm:structural} shows that \(G\) is bipartite and that its
		open neighborhoods form a Helly family.

		Every cycle of a cactus graph is induced, since a chord would produce two
		cycles with more than one common vertex. Moreover, no vertex outside a cycle
		can be adjacent to two vertices of that cycle, since the two additional edges
		and either suitable arc of the cycle would produce another cycle meeting the
		original one in at least two vertices.

		Because \(G\) is bipartite, every cycle has even length. Let \(C\) be a cycle
		of length at least eight. Theorem~\ref{thm:structural} requires a filling on
		each side of \(C\). Such a filling lies outside the induced cycle and is
		adjacent to two vertices of \(C\), contradicting the cactus property.

		A cycle of length six is also impossible. Let
		\[
		C=x_1y_1x_2y_2x_3y_3x_1.
		\]
		The neighborhoods \(N_G(x_1),N_G(x_2),N_G(x_3)\) intersect pairwise, since
		\[
		y_1\in N_G(x_1)\cap N_G(x_2),\qquad
		y_2\in N_G(x_2)\cap N_G(x_3),\qquad
		y_3\in N_G(x_3)\cap N_G(x_1).
		\]
		The Helly property gives a common neighbor of \(x_1,x_2,x_3\). This vertex
		lies outside \(C\) and is adjacent to at least two vertices of \(C\), again
		contradicting the cactus property. Hence every cycle of \(G\) has length four.

		Conversely, suppose that \(G\) has no isolated vertices and every cycle has
		length four. Then \(G\) is bipartite and has no induced cycle of length at
		least six. Thus \(G\) is chordal bipartite, and
		Corollary~\ref{cor:chordal-bipartite} gives the desired \(2\)-linear
		resolution.
	\end{proof}
	\begin{corollary}\label{cor:unique_cycle}
		The cycle graph \(C_4\) is the unique cycle graph \(C_n\) for which
		\(I_{\NN(C_n)}\) has a \(2\)-linear resolution.
	\end{corollary}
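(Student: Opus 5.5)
The plan is to deduce the corollary directly from Theorem~\ref{thm:cactus-characterization}. For this, the cycle graph \(C_n\) (with \(n\ge3\)) has to fit the hypotheses of that theorem. First, \(C_n\) is a cactus graph, because it contains exactly one cycle, so the condition that any two cycles share at most one vertex holds vacuously. Second, \(C_n\) is nonempty, finite and simple. Third, it has no isolated vertices, since every vertex has degree two.

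Theorem~\ref{thm:cactus-characterization} then says that \(I_{\NN(C_n)}\) has a \(2\)-linear resolution if and only if every cycle of \(C_n\) has length four. The only cycle of \(C_n\) is \(C_n\) itself, with length \(n\). So the condition holds exactly when \(n=4\).

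For \(n=4\) this gives the \(2\)-linear resolution, and for every \(n\ne4\) it shows that the ideal is not \(2\)-linear. Hence \(C_4\) is the unique cycle graph with this property.

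As an optional sanity check for \(n=4\): \(C_4\) is \(K_{2,2}\), which is chordal bipartite, so Corollary~\ref{cor:chordal-bipartite} applies as well. For \(n\ne4\), one can also see the failure directly. When \(n\) is odd, \(C_n\) is not bipartite, so Theorem~\ref{thm:structural} fails. When \(n=6\), the Helly property fails, because the three \(x\)-vertices have pairwise intersecting neighborhoods but no common neighbor. When \(n\ge8\) is even, the cycle has no filling at all.

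There is no real obstacle here. The only point to state explicitly is that a single cycle is a cactus graph, so that Theorem~\ref{thm:cactus-characterization} applies.
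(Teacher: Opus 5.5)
Your proposal is correct and follows the paper's proof, which likewise derives the corollary immediately from Theorem~\ref{thm:cactus-characterization}. Your explicit check that \(C_n\) is a cactus graph without isolated vertices whose only cycle has length \(n\) is exactly the verification the paper leaves implicit.
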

	
	\begin{proof}
		This is immediate from Theorem~\ref{thm:cactus-characterization}.
	\end{proof}
	
	\section{Constructions and closure properties}\label{sec:constructions}
	
	The first result gives a construction, followed by three closure properties.
	
	\begin{proposition}\label{prop:two-sided-cone}
		Let \(H=(X\sqcup Y,E(H))\) be a finite bipartite graph. Let \(x_0\) and
		\(y_0\) be
		two new vertices, and put
		\[
		\widehat X=X\cup\{x_0\},
		\qquad
		\widehat Y=Y\cup\{y_0\}.
		\]
		Define the bipartite graph \(\widehat H\) with color classes
		\(\widehat X\) and \(\widehat Y\) by
		\[
		E(\widehat H)
		=
		E(H)
		\cup
		\{x_0y:y\in Y\cup\{y_0\}\}
		\cup
		\{xy_0:x\in X\}.
		\]
		Then \(I_{\NN(\widehat H)}\) has a \(2\)-linear resolution over
		\(\Bbbk\).
	\end{proposition}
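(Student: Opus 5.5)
The plan is to verify condition \textup{(ii)} of Proposition~\ref{prop:background} directly, rather than going through the cycle conditions of Theorem~\ref{thm:structural}. The key observation is that the two new vertices are universal to the opposite color class. By construction, \(x_0\) is adjacent to every vertex of \(\widehat Y=Y\cup\{y_0\}\), and \(y_0\) is adjacent to every vertex of \(\widehat X=X\cup\{x_0\}\); note that the edge \(x_0y_0\) is included. Hence
\[
y_0\in N_{\widehat H}(x)\ \text{ for all }x\in\widehat X,
\qquad
x_0\in N_{\widehat H}(y)\ \text{ for all }y\in\widehat Y.
\]
I would first check that \(\widehat H\) is indeed bipartite with classes \(\widehat X,\widehat Y\). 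This holds because \(H\) is bipartite with these classes, and every new edge joins \(\widehat X\) to \(\widehat Y\). In particular, no vertex of \(\widehat H\) is isolated.

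Next, I would establish the Helly property. Let \((N_{\widehat H}(v))_{v\in Q}\) be a pairwise intersecting indexed subfamily. Singletons are nonempty by the observation above. If \(|Q|\ge2\), then, since vertices in different color classes of a bipartite graph have no common neighbor, \(Q\) lies entirely in \(\widehat X\) or entirely in \(\widehat Y\). The total intersection therefore contains \(y_0\) in the first case and \(x_0\) in the second, so it is nonempty.

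Then I would establish chordality. By the same observation, any two vertices of \(\widehat X\) have the common neighbor \(y_0\), and any two vertices of \(\widehat Y\) have the common neighbor \(x_0\). By \eqref{eq:half-square-decomposition}, \(\CN(\widehat H)\) is therefore the disjoint union of the complete graphs on \(\widehat X\) and on \(\widehat Y\). This graph is chordal. Proposition~\ref{prop:background} then gives the \(2\)-linear resolution, independently of \(\Bbbk\).

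There is no real obstacle in this argument. The only points needing care are bookkeeping: the Helly convention for singleton subfamilies, and the presence of the edge \(x_0y_0\), which is what makes \(x_0\) and \(y_0\) themselves have neighbors \(y_0\) and \(x_0\). As a cross-check, \(\NN(\widehat H)\) is then the disjoint union of the full simplices on \(\widehat X\) and \(\widehat Y\). Its Stanley--Reisner ideal is \((x_ux_v:u\in\widehat X,\,v\in\widehat Y)\), the edge ideal of a complete bipartite graph, whose linear resolution is classical.
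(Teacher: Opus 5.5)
Your proof is correct and is essentially the paper's argument. You identify \(\CN(\widehat H)\) as the disjoint union of the complete graphs on \(\widehat X\) and \(\widehat Y\), use \(y_0\) and \(x_0\) as common neighbors to get the Helly property, and conclude by Proposition~\ref{prop:background}; the paper checks the Helly property through the clique formulation of Lemma~\ref{lem:helly-flag} rather than directly on pairwise intersecting subfamilies, which amounts to the same thing.
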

	
	\begin{proof}
		Every two vertices of \(\widehat X\) have the common neighbor \(y_0\), and
		every two vertices of \(\widehat Y\) have the common neighbor \(x_0\).
		Vertices in different color classes cannot have a common neighbor in a
		bipartite graph. Hence
		\[
		\CN(\widehat H)=K_{\widehat X}\sqcup K_{\widehat Y},
		\]
		which is chordal.
		
		It remains to prove that the open neighborhoods form a Helly family. Let
		\(Q\) be a
		clique of \(\CN(\widehat H)\). The displayed decomposition shows that either
		\(Q\subseteq\widehat X\) or \(Q\subseteq\widehat Y\). In the first case,
		\[
		Q\subseteq N_{\widehat H}(y_0),
		\]
		while in the second,
		\[
		Q\subseteq N_{\widehat H}(x_0).
		\]
		Thus every clique of \(\CN(\widehat H)\) is contained in an open
		neighborhood. Lemma~\ref{lem:helly-flag} implies that the open neighborhoods of
		\(\widehat H\) form a Helly family, and Proposition~\ref{prop:background} gives the
		result.
	\end{proof}
	
	For \(H=C_{2q}\) with \(q\ge3\), this class properly
	contains the chordal bipartite graphs without isolated vertices: the original
	cycle remains induced in \(\widehat H\), while the two new vertices provide
	fillings from the two color classes. Thus the construction produces admissible
	graphs with induced cycles of arbitrary even length.
	
	A \emph{false twin} of a vertex \(v\) is a nonadjacent vertex with the same open neighborhood as \(v\).
	
	\begin{proposition}
		\label{prop:closure-operations}
		The class of finite simple graphs \(G\) for which
		\(I_{\NN(G)}\) has a \(2\)-linear resolution is closed under the
		following operations:
		\begin{enumerate}[label=\textup{(\alph*)}]
			\item taking a nonempty finite disjoint union;
			\item adding a pendant vertex adjacent to an arbitrary vertex;
			\item adding a false twin of an arbitrary vertex.
		\end{enumerate}
	\end{proposition}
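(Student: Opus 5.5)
I will verify in each case condition (ii) of Proposition~\ref{prop:background}: the open neighborhoods form a Helly family and \(\CN\) is chordal. By Lemma~\ref{lem:helly-flag}, the Helly condition may be replaced by the statement that every nonempty clique of \(\CN\) lies in some open neighborhood. Throughout, \(G\) denotes a graph in the class; in particular it has no isolated vertices.

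For \textup{(a)}, let \(G=G_1\sqcup\cdots\sqcup G_r\). A common neighbor of two vertices lies in the component containing both of them, so
\[
\CN(G)=\CN(G_1)\sqcup\cdots\sqcup\CN(G_r).
\]
A disjoint union of chordal graphs is chordal. A clique of \(\CN(G)\) lies in a single \(\CN(G_k)\), and hence in some \(N_{G_k}(w)=N_G(w)\).

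For \textup{(c)}, let \(v'\) be a false twin of \(v\). Then \(N(v')=N(v)\), and for every other vertex \(w\) the neighborhood \(N(w)\) either stays the same or gains \(v'\) exactly when \(v\in N(w)\). Hence, in \(\CN\), the vertex \(v'\) is a true twin of \(v\); it is adjacent to \(v\) because \(N(v)\ne\varnothing\). Adding a true twin preserves chordality: a hole through \(v'\) cannot also contain \(v\), since \(v\) and \(v'\) are adjacent with equal neighborhoods, so replacing \(v'\) by \(v\) gives a hole of the original graph. For a clique \(Q\), replace \(v'\) by \(v\) to get a clique of the old \(\CN\). It lies in some \(N_G(w)\), and then \(Q\subseteq N(w)\) in the new graph, because \(v\in N(w)\) forces \(v'\in N(w)\).

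For \textup{(b)}, let \(p\) be a new vertex attached to \(u\). The only new common-neighbor relations come from \(u\) gaining \(p\): the vertex \(p\) becomes \(\CN\)-adjacent to every vertex of \(N_G(u)\). Since \(N(p)=\{u\}\), it is adjacent to nothing else, and the old edges are unchanged because \(p\) has degree one. Thus \(p\) is attached to the clique \(N_G(u)\) of the old \(\CN\); this is a clique because any two of its members share \(u\). So \(p\) is simplicial, and adding a simplicial vertex preserves chordality.

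For the Helly condition, let \(Q\) be a clique. If \(p\notin Q\), then \(Q\) is an old clique and lies in some \(N_G(w)\subseteq N(w)\). If \(p\in Q\), then \(Q\setminus\{p\}\subseteq N_G(u)\), and so \(Q\subseteq N(u)\) in the new graph. One edge case needs care: when \(u\) was isolated, \(G\) is not in the class, but this case cannot arise because graphs in the class have no isolated vertices.

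I expect the main subtlety to be \textup{(b)}, namely checking that the new \(\CN\)-neighborhood of \(p\) is exactly \(N_G(u)\) and that no other adjacencies change. Once that is established, the rest follows from the standard facts about simplicial vertices and true twins.
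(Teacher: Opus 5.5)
Your proof is correct and follows essentially the same route as the paper: you verify condition (ii) of Proposition~\ref{prop:background} through the clique formulation of Lemma~\ref{lem:helly-flag}. The pendant vertex becomes simplicial in \(\CN\) with neighborhood \(N_G(u)\), and the false twin becomes a true twin in \(\CN\). The one point you leave implicit in (c), and which the paper states, is that \(\CN(G')[V(G)]=\CN(G)\): the new vertex \(v'\) is a common neighbor of two old vertices only when both lie in \(N_G(v)\), so they already share \(v\). Your hole-transfer and clique-replacement steps rely on this.
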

	
	\begin{proof}
		The proof uses Proposition~\ref{prop:background} and the clique formulation
		in Lemma~\ref{lem:helly-flag}.

		For \textup{(a)}, if \(G=G_1\sqcup\cdots\sqcup G_t\), then
		\[
		\CN(G)=\CN(G_1)\sqcup\cdots\sqcup\CN(G_t).
		\]
		This graph is chordal, and every clique lies in one component, where it is
		contained in an open neighborhood. Hence the open neighborhoods of \(G\)
		are Helly.

		For \textup{(b)}, let \(p\) be pendant at \(v\). The old induced subgraph
		of \(\CN(G')\) is \(\CN(G)\), and
		\[
		N_{\CN(G')}(p)=N_G(v),
		\]
		which is a clique because its vertices have the common neighbor \(v\).
		Thus \(p\) is simplicial and \(\CN(G')\) is chordal. A clique not
		containing \(p\) has an old common neighbor, while a clique containing
		\(p\) is contained in \(N_{G'}(v)\). Hence the new neighborhood family is
		Helly.

		For \textup{(c)}, let \(v'\) be a false twin of \(v\). Then
		\(\CN(G')[V(G)]=\CN(G)\), and \(v,v'\) are true twins in
		\(\CN(G')\). Adding a true twin to a chordal graph preserves chordality:
		an induced cycle containing only the new twin can be transferred to the old
		one, whereas a cycle containing both has a chord through their equal closed
		neighborhoods. Finally, if a clique \(Q\) contains \(v'\), replace \(v'\)
		by \(v\); an old common neighbor of the resulting clique is also a common
		neighbor of \(Q\), since \(N_{G'}(v')=N_G(v)\). Thus the open
		neighborhoods remain Helly in all three cases.
	\end{proof}
	
	The false twin hypothesis is essential: adding a true twin to an endpoint of
	\(K_2\) produces \(K_3\), which is excluded by
	Theorem~\ref{thm:structural}.
	
	\section{Multigraded and graded Betti numbers}\label{sec:betti}
	
	Throughout this section, assume that \(I_{\NN(G)}\) has a \(2\)-linear
	resolution, and set \(n=|V|\), \(H=\CN(G)\), and
	\(R=\Bbbk[\NN(G)]\). The common neighbor graph gives the multigraded
	Betti numbers, while subsets of the original graph with a common neighbor give
	the total Betti numbers.
	Proposition~\ref{prop:background} gives
	\[
	\NN(G)=\Cl(H)\qquad\text{and}\qquad H\text{ is chordal}.
	\]
	For \(W\subseteq V\), let \(c_H(W)\) denote the number of connected
	components of \(H[W]\), and write
	\[
	\beta_{i,W}(R)
	=
	\dim_{\Bbbk}\operatorname{Tor}_i(R,\Bbbk)_W
	\]
	for the squarefree multigraded Betti number indexed by \(W\).

	\begin{lemma}\label{lem:chordal-clique-contractible}
		If \(H\) is a nonempty connected chordal graph, then its clique complex
		\(\Cl(H)\) is contractible.
	\end{lemma}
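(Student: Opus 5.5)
We argue by induction on the number of vertices of \(H\), using the existence of a simplicial vertex in every chordal graph (Dirac's theorem). If \(H\) has one vertex, then \(\Cl(H)\) is a point and hence contractible. Suppose \(|V(H)|\ge2\). Since \(H\) is chordal and not complete, it has two nonadjacent simplicial vertices; if \(H\) is complete, every vertex is simplicial. In either case we can choose a simplicial vertex \(s\) such that \(H-s\) is still connected. The complete case is immediate: \(\Cl(H)\) is then a full simplex. In the noncomplete case, take any simplicial vertex \(s\). Its neighbors form a clique, so any path through \(s\) can be shortcut through an edge between its two neighbors. Hence removing \(s\) preserves connectivity, provided \(H-s\) is nonempty, which holds because \(|V(H)|\ge2\). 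The graph \(H-s\) is an induced subgraph of a chordal graph, hence chordal, and by induction \(\Cl(H-s)\) is contractible.

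Next we describe how \(s\) is attached. Because \(N_H(s)\) is a clique, the faces of \(\Cl(H)\) containing \(s\) are exactly the sets \(\{s\}\cup F\) with \(F\subseteq N_H(s)\). Thus
\[
\Cl(H)=\Cl(H-s)\cup \bigl(s * \Delta_{N_H(s)}\bigr),
\]
where \(\Delta_{N_H(s)}\) denotes the full simplex on \(N_H(s)\). The two pieces intersect in \(\Delta_{N_H(s)}\), which is a nonempty simplex because \(H\) is connected with at least two vertices, so \(N_H(s)\ne\varnothing\). The cone \(s*\Delta_{N_H(s)}\) and the intersection are both contractible. Gluing a contractible complex to a contractible complex along a contractible subcomplex gives a contractible space, by the gluing lemma for CW pairs or a Seifert--van Kampen plus Mayer--Vietoris argument together with Whitehead's theorem. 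Equivalently, \(\Cl(H)\) collapses onto \(\Cl(H-s)\): the faces containing \(s\) are removed by elementary collapses, pairing \(\{s\}\cup F\) with \(F\cup\{s,u\}\) for a fixed \(u\in N_H(s)\). This gives a homotopy equivalence \(\Cl(H)\simeq\Cl(H-s)\).

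The main point to handle carefully is the choice of a simplicial vertex whose removal keeps \(H\) connected, since the induction hypothesis requires connectivity. The shortcut argument above settles this. For Hochster's formula later, only \(\Bbbk\)-acyclicity of \(\Cl(H[W])\) for connected \(H[W]\) is needed, and the collapse argument already yields this.
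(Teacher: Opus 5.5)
Your proof is correct and follows essentially the same route as the paper: induction on \(|V(H)|\), removal of a simplicial vertex \(s\) (with connectivity of \(H-s\) preserved by shortcutting through the clique \(N_H(s)\)), and the collapse of the cone \(s*\Delta_{N_H(s)}\) onto the nonempty simplex on \(N_H(s)\). The appeal to Dirac's two nonadjacent simplicial vertices and the separate treatment of the complete case are unnecessary, since, as you note yourself, the shortcut argument works for every simplicial vertex.
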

	
	\begin{proof}
		We argue by induction on \(|V(H)|\). The result is immediate when \(H\) has
		one vertex. Assume that \(|V(H)|>1\). Since \(H\) is chordal, it has a simplicial vertex \(v\). Connectedness gives
		\(N_H(v)\ne\varnothing\).
		
		The graph \(H-v\) remains connected. If a path in \(H\) contains a
		segment \(x,v,y\), then \(x\) and \(y\) are adjacent because \(N_H(v)\) is a
		clique. Replacing the segment \(x,v,y\) by the edge \(xy\) removes \(v\)
		from the path. The graph \(H-v\) is also chordal, so the induction hypothesis
		implies that \(\Cl(H-v)\) is contractible.
		
		Every face of \(\Cl(H)\) containing \(v\) lies in the simplex on
		\(N_H[v]\). This simplex meets \(\Cl(H-v)\) in the nonempty simplex on
		\(N_H(v)\). Collapsing the simplex on \(N_H[v]\) onto this common face,
		relative to the intersection, collapses \(\Cl(H)\) onto \(\Cl(H-v)\).
		Since \(\Cl(H-v)\) is contractible, so is \(\Cl(H)\).
	\end{proof}

	\begin{theorem}\label{thm:all-betti}
		The multigraded Betti numbers of \(R=\Bbbk[\NN(G)]\) are
		\[
		\beta_{0,\varnothing}(R)=1,
		\qquad
		\beta_{0,W}(R)=0\quad(W\ne\varnothing),
		\]
		and, for \(i\ge1\),
		\begin{equation}\label{eq:multigraded}
			\beta_{i,W}(R)=
			\begin{cases}
				c_H(W)-1,& |W|=i+1,\\
				0,& |W|\ne i+1.
			\end{cases}
		\end{equation}
		Consequently, the only total Betti numbers in positive homological degrees are
		\begin{equation}\label{eq:component-total}
			\beta_{i,i+1}(R)=
			\sum_{\substack{W\subseteq V\\|W|=i+1}}
			\bigl(c_H(W)-1\bigr),\qquad 1\le i\le n-1.
		\end{equation}
		Equivalently, all Betti numbers of the ideal are
		\[
		\beta_{p,p+2}(I_{\NN(G)})
		=
		\sum_{\substack{W\subseteq V\\|W|=p+2}}
		\bigl(c_H(W)-1\bigr),\qquad 0\le p\le n-2,
		\]
		and \(\beta_{p,j}(I_{\NN(G)})=0\) for \(j\ne p+2\).
	\end{theorem}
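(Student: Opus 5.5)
We will apply Hochster's formula,
\[
\beta_{i,W}(R)=\dim_{\Bbbk}\widetilde H_{|W|-i-1}\bigl(\NN(G)|_W;\Bbbk\bigr),
\]
and combine it with the identification \(\NN(G)=\Cl(H)\) from Proposition~\ref{prop:background}. For \(W\subseteq V\), the restriction \(\Cl(H)|_W\) equals \(\Cl(H[W])\). Moreover, \(H[W]\) is chordal, because induced subgraphs of chordal graphs are chordal.

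The plan has three steps.

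First, we compute the reduced homology of \(\Cl(H[W])\). Write \(H[W]\) as the disjoint union of its \(c_H(W)\) connected components. Each component is a nonempty connected chordal graph, so Lemma~\ref{lem:chordal-clique-contractible} shows that its clique complex is contractible. Hence \(\Cl(H[W])\) is homotopy equivalent to a discrete set of \(c_H(W)\) points. For \(W\ne\varnothing\), this gives
\[
\widetilde H_0=\Bbbk^{c_H(W)-1}
\]
and \(\widetilde H_k=0\) for \(k\ne0\). For \(W=\varnothing\), the complex is \(\{\varnothing\}\), whose only nonzero reduced homology is \(\widetilde H_{-1}=\Bbbk\).

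Second, we feed this into Hochster's formula.
\begin{itemize}
\item For \(W=\varnothing\), only \(i=0\) contributes, giving \(\beta_{0,\varnothing}=1\).
\item For \(W\ne\varnothing\), a nonzero value requires \(|W|-i-1=0\), that is, \(|W|=i+1\), and then \(\beta_{i,W}=c_H(W)-1\). This yields \eqref{eq:multigraded}.
\item In particular, \(\beta_{0,W}=0\) for nonempty \(W\): the case \(i=0\) would force \(|W|=1\), and then \(c_H(W)-1=0\).
\end{itemize}
One bookkeeping point: vertices belonging to no face would give cone-free degenerate restrictions. This cannot happen here, since Proposition~\ref{prop:background} rules out isolated vertices of \(G\), so every vertex is a face.

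Third, we pass to total and ideal Betti numbers. The resolution of a Stanley--Reisner ring is supported only in squarefree multidegrees, so summing the multigraded numbers over \(|W|=i+1\) gives \eqref{eq:component-total}. This sum ranges over \(1\le i\le n-1\), since \(|W|\le n\). The standard shift \(\beta_{p,j}(I)=\beta_{p+1,j}(R)\) for \(p\ge0\) then gives the ideal statement with \(j=p+2\), and all other \(\beta_{p,j}(I)\) vanish.

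There is no serious obstacle here. The only point needing care is the homotopy step: we must note that Lemma~\ref{lem:chordal-clique-contractible} applies componentwise, and that the reduced homology of a disjoint union of contractible pieces is concentrated in degree \(0\), of rank one less than the number of pieces. The empty-set convention in Hochster's formula also has to be handled separately.
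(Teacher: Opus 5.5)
Your proposal is correct and follows the paper's proof essentially step for step. Both arguments apply Hochster's formula to \(\NN(G)=\Cl(H)\), use componentwise contractibility from Lemma~\ref{lem:chordal-clique-contractible} on the chordal induced subgraphs \(H[W]\), sum over squarefree degrees, and shift indices from \(R\) to \(I_{\NN(G)}\). Your explicit handling of \(W=\varnothing\) via \(\widetilde H_{-1}(\{\varnothing\})=\Bbbk\), and of \(\beta_{0,W}\) via \(c_H(\{v\})=1\), only spells out what the paper calls the standard convention.
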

	
	\begin{proof}
		Let \(W\subseteq V\). Since \(H\) is chordal, the induced subgraph \(H[W]\)
		is chordal as well. By Lemma~\ref{lem:chordal-clique-contractible}, the clique
		complex of each connected component of \(H[W]\) is contractible. Therefore,
		for \(W\ne\varnothing\), the complex \(\Cl(H[W])\) is a disjoint union of
		\(c_H(W)\) contractible complexes, and its only nonzero reduced homology is
		\[
		\dim_{\Bbbk}\widetilde H_0(\Cl(H[W]);\Bbbk)=c_H(W)-1.
		\]
		
		Hochster's formula \cite{Hochster1977} now gives, for \(i\ge1\),
		\[
		\beta_{i,W}(R)
		=
		\dim_{\Bbbk}\widetilde H_{|W|-i-1}
		\bigl(\Cl(H[W]);\Bbbk\bigr)
		=
		\begin{cases}
			c_H(W)-1,&|W|=i+1,\\
			0,&|W|\ne i+1.
		\end{cases}
		\]
		For \(W=\varnothing\), the standard convention gives
		\(\beta_{0,\varnothing}(R)=1\), while all other multigraded Betti numbers in
		homological degree zero vanish. This agrees with the formula of Engstr\"om and
		Stamps \cite{EngstromStamps2013} for clique complexes of chordal
		graphs.
		
		Summing over all subsets \(W\) with \(|W|=i+1\) gives
		\eqref{eq:component-total}. Finally, the exact sequence
		\[
		0\longrightarrow I_{\NN(G)}\longrightarrow S\longrightarrow R
		\longrightarrow0
		\]
		shifts the positive homological degrees by one. Hence
		\[
		\beta_{p,p+2}(I_{\NN(G)})=\beta_{p+1,p+2}(R),
		\]
		which yields the asserted Betti numbers of the ideal.
	\end{proof}
	
	Formula~\eqref{eq:component-total} involves all induced subgraphs of \(H\).
	The next form uses only counts of common neighborhoods in the original graph.
	
	\begin{definition}
		For \(0\le r\le n\), let
		\[
		q_r(G)=
		\left|
		\left\{
		A\subseteq V:
		|A|=r,\ 
		\bigcap_{a\in A}N_G(a)\ne\varnothing
		\right\}
		\right|.
		\]
		Thus \(q_r(G)\) is the number of faces of \(\NN(G)\) having
		cardinality \(r\).
	\end{definition}
	
	By the convention for an empty intersection, \(q_0(G)=1\), while
	Proposition~\ref{prop:background} gives \(q_1(G)=n\).
	
	\begin{proposition}\label{prop:q-formula}
		For \(1\le i\le n-1\),
		\begin{equation}\label{eq:q-formula}
			\beta_{i,i+1}(\Bbbk[\NN(G)])
			=
			-\binom n{i+1}
			+\sum_{r=1}^{i+1}
			(-1)^{r+1}q_r(G)
			\binom{n-r}{i+1-r}.
		\end{equation}
		Together with \(\beta_{0,0}=1\) and the vanishing outside the linear strand,
		this computes the entire graded Betti table.
	\end{proposition}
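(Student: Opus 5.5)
We derive \eqref{eq:q-formula} from Theorem~\ref{thm:all-betti} by expressing the component count \(c_H(W)\) of each induced subgraph through face counts, and then summing over \(W\). Since \(H[W]\) is chordal, Lemma~\ref{lem:chordal-clique-contractible} shows that each component of \(\Cl(H[W])\) is contractible. Hence the Euler characteristic gives
\[
c_H(W)=\chi\bigl(\Cl(H[W])\bigr)=\sum_{r\ge1}(-1)^{r+1}f_r(W),
\]
where \(f_r(W)\) is the number of faces of \(\Cl(H[W])\) of cardinality \(r\). This holds for nonempty \(W\); for \(W=\varnothing\) both sides vanish. By Proposition~\ref{prop:background}, \(\Cl(H)=\NN(G)\). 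Therefore the faces of \(\Cl(H[W])\) are exactly the faces of \(\NN(G)\) contained in \(W\).

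We then sum over all \(W\subseteq V\) with \(|W|=i+1\). A face \(A\) of \(\NN(G)\) with \(|A|=r\) lies in exactly \(\binom{n-r}{i+1-r}\) such sets \(W\). Interchanging the order of summation gives
\[
\sum_{|W|=i+1}c_H(W)=\sum_{r=1}^{i+1}(-1)^{r+1}q_r(G)\binom{n-r}{i+1-r}.
\]
Next we subtract \(\sum_{|W|=i+1}1=\binom n{i+1}\). Formula~\eqref{eq:component-total} then yields \eqref{eq:q-formula}. Faces with \(r>i+1\) contribute nothing, by the binomial convention and because they cannot fit inside \(W\).

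For the final sentence of the statement, Theorem~\ref{thm:all-betti} already shows that \(\beta_{0,0}=1\) and that all other Betti numbers vanish off the linear strand \(j=i+1\). Together with \eqref{eq:q-formula}, this determines the whole table. The only point requiring care is the Euler characteristic identity. It depends on the contractibility of each component, which gives reduced homology only in degree \(0\) of rank \(c_H(W)-1\); this is precisely where chordality enters. The remaining steps are routine double counting.
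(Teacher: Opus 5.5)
Your proof is correct, but it takes a different route from the paper's. The paper does not go back through Theorem~\ref{thm:all-betti}. Instead it writes the Hilbert series in face form, $\operatorname{Hilb}(t)=Q_G\bigl(t/(1-t)\bigr)$ with $Q_G(z)=\sum_r q_r(G)z^r$. Linearity of the resolution makes the Hilbert numerator equal to $1+\sum_i(-1)^i\beta_{i,i+1}t^{i+1}$, with no cancellation between homological degrees. The paper then extracts the coefficient of $t^{i+1}$ from $\sum_r q_r(G)t^r(1-t)^{n-r}$, and the term $-\binom n{i+1}$ is the contribution of $q_0(G)=1$.

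You work one level finer. Hochster's formula, packaged in Theorem~\ref{thm:all-betti}, together with Lemma~\ref{lem:chordal-clique-contractible}, identifies each $c_H(W)-1$ with the reduced Euler characteristic of $\Cl(H[W])$. Double counting over the $\binom{n-r}{i+1-r}$ supersets of size $i+1$ then assembles these local identities into the graded one; your $-\binom n{i+1}$ is the empty face.

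The two computations are the same identity at different resolutions, since the coefficient of $t^{i+1}$ in the Hilbert numerator equals $(-1)^i\sum_{|W|=i+1}\widetilde\chi\bigl(\Cl(H[W])\bigr)$. Your version gives the sharper per-$W$ statement and shows explicitly where chordality enters. The paper's version is shorter and needs neither Hochster's formula nor the contractibility lemma. It uses only the face form of the Hilbert series and the absence of cancellation guaranteed by linearity.
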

	
	\begin{proof}
		Let
		\[
		Q_G(z)=\sum_{r=0}^{n}q_r(G)z^r
		\]
		be the face enumerator of \(\NN(G)\). The face form of the Stanley--Reisner
		Hilbert series gives
		\[
		\operatorname{Hilb}_{\Bbbk[\NN(G)]}(t)
		=Q_G\left(\frac{t}{1-t}\right).
		\]
		Because the resolution is linear, the Hilbert numerator is
		\[
		(1-t)^n\operatorname{Hilb}_{\Bbbk[\NN(G)]}(t)
		=1+\sum_{i=1}^{n-1}
		(-1)^i\beta_{i,i+1}(\Bbbk[\NN(G)])t^{i+1}.
		\]
		
		Also,
		\[
		(1-t)^nQ_G\left(\frac{t}{1-t}\right)
		=\sum_{r=0}^{n}q_r(G)t^r(1-t)^{n-r}.
		\]
		The coefficient of \(t^{i+1}\) in this expression is
		\[
		\sum_{r=0}^{i+1}
		(-1)^{i+1-r}q_r(G)\binom{n-r}{i+1-r}.
		\]
		Equating this coefficient with
		\((-1)^i\beta_{i,i+1}(\Bbbk[\NN(G)])\), multiplying by \((-1)^i\), and
		using \(q_0(G)=1\) gives \eqref{eq:q-formula}.
	\end{proof}

	\begin{corollary}\label{cor:q-recovery}
		For a graph satisfying Proposition~\ref{prop:background}, the graded Betti
		table and \(n\) determine the complete face vector
		\((q_0(G),\ldots,q_n(G))\). Explicitly, \(q_0=1\), \(q_1=n\), and for
		\(2\le k\le n\),
		\begin{equation}\label{eq:q-inversion}
		q_k(G)=(-1)^{k+1}\left[
		\beta_{k-1,k}+\binom nk
		-\sum_{r=1}^{k-1}(-1)^{r+1}q_r(G)
		\binom{n-r}{k-r}
		\right].
		\end{equation}
	\end{corollary}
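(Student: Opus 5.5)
The plan is to invert the triangular system \eqref{eq:q-formula} of Proposition~\ref{prop:q-formula} one row at a time. First, the values \(q_0=1\) and \(q_1=n\) are already known: \(q_0=1\) by the empty-intersection convention, and \(q_1=n\) because Proposition~\ref{prop:background} rules out isolated vertices, so every vertex is a face of \(\NN(G)\). Since \(n\) is part of the given data, these two values are determined.

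Next, fix \(k\) with \(2\le k\le n\) and put \(i=k-1\), so that \(1\le i\le n-1\) and \eqref{eq:q-formula} applies. It reads
\[
\beta_{k-1,k}=-\binom nk+\sum_{r=1}^{k}(-1)^{r+1}q_r(G)\binom{n-r}{k-r}.
\]
I will split off the term with \(r=k\). Its coefficient is \((-1)^{k+1}\binom{n-k}{0}=(-1)^{k+1}\), which is \(\pm1\) and hence invertible. Solving for \(q_k\) gives
\[
(-1)^{k+1}q_k=\beta_{k-1,k}+\binom nk-\sum_{r=1}^{k-1}(-1)^{r+1}q_r\binom{n-r}{k-r}.
\]
Multiplying both sides by \((-1)^{k+1}\), and using \(((-1)^{k+1})^2=1\), yields \eqref{eq:q-inversion}.

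Finally, induction on \(k\) shows that each \(q_k\) is determined by \(n\), the numbers \(\beta_{1,2},\dots,\beta_{k-1,k}\), and the earlier values \(q_1,\dots,q_{k-1}\). Together with the first step, this shows that the whole face vector is determined by the Betti table and \(n\).

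There is no genuine obstacle here. The one point that needs care is that for \(k=n\) the index \(i=n-1\) still lies in the range of Proposition~\ref{prop:q-formula}, and it does. Sign bookkeeping is the only other place an error could creep in.
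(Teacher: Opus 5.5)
Your proposal is correct and follows the paper's own argument: apply \eqref{eq:q-formula} with \(i=k-1\), isolate the \(r=k\) term, whose coefficient is \((-1)^{k+1}\), and solve; \(q_0=1\) and \(q_1=n\) come from the empty-intersection convention and Proposition~\ref{prop:background}. Your remarks that \(i=n-1\) still lies in the admissible range when \(k=n\), and that induction on \(k\) determines the \(q_k\), make explicit what the paper leaves implicit.
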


	\begin{proof}
		Equation~\eqref{eq:q-formula} with \(i=k-1\) has coefficient
		\((-1)^{k+1}\) on \(q_k(G)\). The displayed recursion follows by isolating
		this term.
	\end{proof}
	
	In particular,
	\[
	\beta_{0,2}(I_{\NN(G)})
	=\beta_{1,2}(\Bbbk[\NN(G)])
	=\binom n2-q_2(G),
	\]
	the number of unordered vertex pairs having no common neighbor.
	
	The terminal entry of the linear strand controls the projective dimension and
	depth. The same Betti formulas also determine the multiplicity and the relevant
	Cohen--Macaulay properties.

	\begin{theorem}\label{thm:terminal}
		Let \(G\) have \(n\) vertices and \(c\) connected components. If
		\(I_{\NN(G)}\) has a \(2\)-linear resolution, then
		\[
		\beta_{n-1,n}(\Bbbk[\NN(G)])=2c-1.
		\]
		Consequently,
		\begin{equation}\label{eq:pd-depth}
			\pd\Bbbk[\NN(G)]=n-1,\qquad
			\depth\Bbbk[\NN(G)]=1.
		\end{equation}
		Moreover,
		\begin{equation}\label{eq:dimension}
			\dim\Bbbk[\NN(G)]=d(G),
		\end{equation}
		and \(\Bbbk[\NN(G)]\) is initially Cohen--Macaulay if and only if
		\(G\) has a connected component isomorphic to \(K_{1,m}\) for some
		\(m\ge1\). Consequently, \(\Bbbk[\NN(G)]\) is Cohen--Macaulay if and
		only if \(G\cong cK_2\).
	\end{theorem}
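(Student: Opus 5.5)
By Theorem~\ref{thm:all-betti}, the only multigraded Betti number of \(R=\Bbbk[\NN(G)]\) in homological degree \(n-1\) is indexed by \(W=V\). Hence
\[
\beta_{n-1,n}(R)=c_H(V)-1,\qquad H=\CN(G),
\]
and it suffices to show that \(H\) has exactly \(2c\) connected components. By Theorem~\ref{thm:structural}, \(G\) is bipartite, and by Proposition~\ref{prop:background}, \(G\) has no isolated vertices. Let \(G_1,\dots,G_c\) be the components of \(G\). Each \(G_k\) has color classes \(X_k,Y_k\), both nonempty. Vertices in different components of \(G\) have no common neighbor, and neither do vertices in different color classes, by \eqref{eq:half-square-decomposition}. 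So every component of \(H\) lies inside some \(X_k\) or some \(Y_k\).

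Conversely, \(X_k\) is connected in \(H\). Given \(x,x'\in X_k\), take a path in \(G_k\) joining them. Its vertices in \(X_k\) occur at every second position, and consecutive ones share the intermediate \(Y_k\)-vertex as a common neighbor, so they give a walk in \(H\). The same argument applies to \(Y_k\). Thus \(H\) has exactly \(2c\) components, and \(\beta_{n-1,n}(R)=2c-1\ge1\).

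For the projective dimension, note that \(\beta_{i,W}(R)\) vanishes unless \(|W|=i+1\le n\). Hence nothing survives beyond homological degree \(n-1\), while degree \(n-1\) is nonzero; so \(\pd R=n-1\). The Auslander--Buchsbaum formula then gives \(\depth R=n-(n-1)=1\), which proves \eqref{eq:pd-depth}.

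For \eqref{eq:dimension}, every face of \(\NN(G)\) is contained in some \(N_G(w)\), and each \(N_G(w)\) is itself a face. So the facets are exactly the inclusion-maximal open neighborhoods, and \(\dim\NN(G)+1=d(G)\), which gives \(\dim R=d(G)\).

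For the initially Cohen--Macaulay claim, since \(\depth R=1\), the condition reads \(\operatorname{indim}\NN(G)=0\). Equivalently, \(\NN(G)\) has a facet \(\{u\}\). This happens exactly when some \(N_G(w)=\{u\}\) and no open neighborhood properly contains \(u\). Every neighbor \(w\) of \(u\) has \(u\in N_G(w)\), so the condition says precisely that every neighbor of \(u\) is a leaf. That is, the component of \(u\) is a star \(K_{1,m}\) with center \(u\), where \(m\ge1\) because \(u\) is not isolated.

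Conversely, in a component \(K_{1,m}\) with center \(u\), each leaf has neighborhood \(\{u\}\). The only neighborhood containing \(u\) is \(\{u\}\) itself, and the center's neighborhood does not contain \(u\). So \(\{u\}\) is a facet. For \(m=1\), either endpoint serves as \(u\).

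Finally, \(R\) is Cohen--Macaulay iff \(\depth R=\dim R\), that is, iff \(d(G)=1\). A graph with no isolated vertices and maximum degree one is a perfect matching, so this means \(G\cong cK_2\). For \(G\cong cK_2\), indeed \(\dim R=1=\depth R\).

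I expect the main obstacle to be the component count of \(H\). One must use both the absence of isolated vertices (to get \(2c\) rather than fewer components) and the connectivity of the half squares within each component. The facet analysis for the initially Cohen--Macaulay criterion needs some care with the degenerate case \(K_2\), but is otherwise routine.
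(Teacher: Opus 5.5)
Your proposal is correct and follows essentially the same route as the paper's proof. Both arguments count the \(2c\) components of \(\CN(G)\) through the half squares of each component and read off \(\beta_{n-1,n}\) from the \(W=V\) term of Theorem~\ref{thm:all-betti}. Both then apply Auslander--Buchsbaum and show that a zero-dimensional facet \(\{u\}\) is the same as every neighbor of \(u\) being a leaf. You also make explicit a point the paper leaves implicit: the absence of isolated vertices guarantees that both color classes of each component are nonempty, which is what gives exactly \(2c\) components.
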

	
	\begin{proof}
		By Theorem~\ref{thm:structural}, the graph \(G\) is bipartite. Let
		\(V=X\sqcup Y\) be a bipartition, and let \(C\) be a connected component of
		\(G\). Any two vertices in \(X\cap V(C)\) are joined by an even path in
		\(G\), which induces a walk in the half square
		\(\CN(G)[X\cap V(C)]\). Hence this half square is connected. The same
		argument applies to \(Y\cap V(C)\). No edge of the common neighbor graph joins different components of \(G\),
		and no edge joins the two color classes. Each
		component of \(G\) therefore contributes exactly two components to
		\(\CN(G)\), so
		\[
		c(\CN(G))=2c.
		\]
		Taking \(W=V\) in \eqref{eq:multigraded} gives
		\[
		\beta_{n-1,n}(\Bbbk[\NN(G)])=c(\CN(G))-1=2c-1.
		\]
		This Betti number is nonzero in homological degree \(n-1\), and
		Theorem~\ref{thm:all-betti} shows that no larger homological degree occurs.
		Thus
		\[
		\pd\Bbbk[\NN(G)]=n-1.
		\]
		The Auslander--Buchsbaum formula then yields
		\(\depth\Bbbk[\NN(G)]=1\).
		
		For the dimension, every face of \(\NN(G)\) is contained in an
		open neighborhood, and every open neighborhood is itself a face. The maximum
		cardinality of a face is therefore \(d(G)\), proving
		\[
		\dim\Bbbk[\NN(G)]=d(G).
		\]
		
		Since the depth is one, \(\Bbbk[\NN(G)]\) is initially Cohen--Macaulay if
		and only if \(\NN(G)\) has a facet of dimension zero. Suppose first that
		\(\{u\}\) is such a facet. Proposition~\ref{prop:background} gives
		\(N_G(u)\ne\varnothing\). If a vertex \(y\in N_G(u)\) had a neighbor
		\(v\ne u\), then \(y\) would be a common neighbor of \(u\) and \(v\), so
		\[
		\{u,v\}\in\NN(G),
		\]
		contrary to the maximality of \(\{u\}\). Thus every neighbor of \(u\) has
		degree one, and the component containing \(u\) is
		\(K_{1,\deg_G(u)}\).
		
		Conversely, suppose that a component of \(G\) is a star \(K_{1,m}\), and let
		\(u\) be its center. The singleton \(\{u\}\) is a face of \(\NN(G)\). For
		every \(v\ne u\), the vertices \(u\) and \(v\) have no common neighbor. If
		\(v\) lies in another component, their neighborhoods are disjoint; if \(v\)
		is a leaf of the star, then
		\[
		N_G(u)\cap N_G(v)=\varnothing.
		\]
		No face therefore properly contains \(\{u\}\), so \(\{u\}\) is a facet.
		This proves the initially Cohen--Macaulay characterization.
		
		Finally, Cohen--Macaulayness is equivalent to
		\(\depth\Bbbk[\NN(G)]=\dim\Bbbk[\NN(G)]\), and hence to \(d(G)=1\).
		Proposition~\ref{prop:background} excludes isolated vertices, so \(d(G)=1\)
		holds exactly when every component of \(G\) is \(K_2\). Therefore
		\(G\cong cK_2\).
	\end{proof}
	
	Thus the Betti table determines the number of connected components by
	\[
	c(G)=\frac{\beta_{n-1,n}+1}{2},
	\]
	and every terminal entry is odd.
	
	\begin{proposition}\label{prop:multiplicity}
		Let \(d=d(G)\), and count equal open neighborhoods only once.
		Then
		\[
		\mult\bigl(\Bbbk[\NN(G)]\bigr)
		=
		\left|
		\left\{
		N_G(v):v\in V,\ \deg_G(v)=d
		\right\}
		\right|.
		\]
	\end{proposition}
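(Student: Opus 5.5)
The multiplicity of a Stanley--Reisner ring is the number of facets of maximal dimension, so the plan is to identify those facets. I will use the standard fact that for any simplicial complex \(\Delta\) of dimension \(\delta-1\), the Hilbert series \(\sum_{r} f_{r-1}t^r(1-t)^{-r}\) has a pole of order \(\delta\) at \(t=1\). The coefficient of \((1-t)^{-\delta}\) is \(f_{\delta-1}\). Hence \(\mult(\Bbbk[\Delta])\) equals the number of faces of cardinality \(\delta=\dim\Bbbk[\Delta]\), and each such face is automatically a facet. In our setting this is \(q_d(G)\), because Theorem~\ref{thm:terminal} gives \(\dim\Bbbk[\NN(G)]=d(G)=d\). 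The same conclusion also follows from the face-form expression \(\operatorname{Hilb}=Q_G(t/(1-t))\) used in the proof of Proposition~\ref{prop:q-formula}. Multiplying it by \((1-t)^d\) and evaluating at \(t=1\) leaves only the top term \(q_d(G)\).

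It then remains to show that
\[
q_d(G)=\bigl|\{N_G(v):\deg_G(v)=d\}\bigr|.
\]
Let \(A\) be a face with \(|A|=d\). By definition there is a vertex \(w\) with \(A\subseteq N_G(w)\). Since \(|N_G(w)|\le d\), we get \(A=N_G(w)\) and \(\deg_G(w)=d\). Conversely, each open neighborhood of a vertex of degree \(d\) is a face of cardinality \(d\). So the \(d\)-element faces are exactly the distinct sets \(N_G(v)\) with \(\deg_G(v)=d\). Counting them once each, as the statement specifies, handles vertices with equal neighborhoods.

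The only delicate point is making sure the leading-coefficient argument is stated with the paper's normalization. The face enumerator includes \(q_0=1\) and the Krull dimension is \(d\), so the top-degree term \(q_d\) is the one that survives. No step here uses the \(2\)-linearity hypothesis beyond the dimension formula, and that formula itself only uses the absence of isolated vertices.
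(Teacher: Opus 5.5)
Your proof is correct and follows essentially the same route as the paper. You first reduce the multiplicity to the number of faces of cardinality \(d\), then show by the same maximality argument that these are exactly the distinct sets \(N_G(v)\) with \(\deg_G(v)=d\). The only difference is that you justify the first step through the leading coefficient of \(Q_G(t/(1-t))\), whereas the paper uses the associativity formula; both are equally standard, and, as you note, neither step really needs the \(2\)-linearity hypothesis.
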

	
	\begin{proof}
		The associativity formula for a Stanley--Reisner ring identifies its
		multiplicity with the number of facets of maximum cardinality. The minimal
		primes of maximum dimension correspond to these facets, and each localized
		quotient has multiplicity one.
		
		Let \(A\) be a face of \(\NN(G)\) with \(|A|=d\). By definition of the
		neighborhood complex, there exists \(v\in V\) such that
		\(A\subseteq N_G(v)\). Since \(|N_G(v)|\le d\), we must have
		\[
		A=N_G(v)
		\qquad\text{and}\qquad
		\deg_G(v)=d.
		\]
		Conversely, if \(\deg_G(v)=d\), then \(N_G(v)\) is a face of cardinality
		\(d\), and hence a facet of maximum dimension. Thus the facets of maximum dimension are exactly the distinct open
		neighborhoods of vertices of maximum degree. Counting each such set once gives
		the formula.
	\end{proof}
	
	\section{Degree data and overlap corrections}\label{sec:overlap}
	
	Throughout this section, assume that \(I_{\NN(G)}\) has a \(2\)-linear
	resolution. The face numbers record whether a set has a common neighbor, but not
	how many common neighbors it has. The following invariant measures the resulting
	overcount.
	
	\begin{definition}
		For \(A\subseteq V\), put
		\[
		\mu_G(A)=\left|\bigcap_{a\in A}N_G(a)\right|.
		\]
		For \(r\ge2\), define the \emph{common neighbor overlap of order \(r\)}
		\[
		\varepsilon_r(G)=
		\sum_{\substack{A\subseteq V\\|A|=r\\\mu_G(A)>0}}
		\bigl(\mu_G(A)-1\bigr).
		\]
	\end{definition}
	
	\begin{proposition}\label{prop:overlap-formula}
		Assume that \(I_{\NN(G)}\) has a \(2\)-linear resolution. Let
		\(d_v=\deg_G(v)\) and \(D=\sum_{v\in V}d_v\). For
		\(1\le i\le n-1\),
		\begin{align}
			\beta_{i,i+1}(\Bbbk[\NN(G)])
			={}&(n-1)\binom n{i+1}
			+(n-D)\binom{n-1}{i}
			-\sum_{v\in V}\binom{n-d_v}{i+1}\notag\\
			&+\sum_{r=2}^{i+1}
			(-1)^r\varepsilon_r(G)
			\binom{n-r}{i+1-r}.
			\label{eq:overlap-formula}
		\end{align}
	\end{proposition}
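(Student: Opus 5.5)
Starting from Proposition~\ref{prop:q-formula}, I would rewrite each face number \(q_r(G)\) for \(r\ge2\) in terms of degrees and overlaps. The key identity is double counting of pairs \((A,w)\) with \(|A|=r\) and \(A\subseteq N_G(w)\):
\[
\sum_{\substack{|A|=r\\ \mu_G(A)>0}}\mu_G(A)=\sum_{w\in V}\binom{d_w}{r},
\]
so that
\[
q_r(G)=\sum_{w\in V}\binom{d_w}{r}-\varepsilon_r(G)\qquad(r\ge2).
\]
For \(r=1\), I use \(q_1(G)=n\) from Proposition~\ref{prop:background}. Writing \(q_1=n=D-(D-n)\) lets the \(r=1\) term fit the same pattern, with \(\sum_w\binom{d_w}{1}=D\) and a correction \(D-n\).

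Substituting into \eqref{eq:q-formula} splits the right side into three parts:
\[
-\binom n{i+1}+\sum_{w\in V}\sum_{r=1}^{i+1}(-1)^{r+1}\binom{d_w}{r}\binom{n-r}{i+1-r},
\]
then the \(r=1\) correction \(-(D-n)\binom{n-1}{i}=(n-D)\binom{n-1}{i}\), and finally the overlap terms \(\sum_{r=2}^{i+1}(-1)^{r}\varepsilon_r(G)\binom{n-r}{i+1-r}\), the sign coming from \(-(-1)^{r+1}\). The last two already match \eqref{eq:overlap-formula}, so everything reduces to evaluating the inner vertex sum.

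For a fixed \(w\) with \(d=d_w\), I would evaluate the full alternating sum including \(r=0\). The claim is
\[
\sum_{r=0}^{i+1}(-1)^{r}\binom{d}{r}\binom{n-r}{i+1-r}=\binom{n-d}{i+1}.
\]
Combinatorially, this is inclusion--exclusion: it counts \((i+1)\)-subsets of an \(n\)-set that avoid a fixed \(d\)-subset. Equivalently, it is the coefficient of \(t^{i+1}\) in \((1+t)^{n-d}=(1+t)^{n}\bigl(1-\tfrac{t}{1+t}\bigr)^{d}\). Hence
\[
\sum_{r=1}^{i+1}(-1)^{r+1}\binom{d}{r}\binom{n-r}{i+1-r}=\binom{n}{i+1}-\binom{n-d}{i+1}.
\]
Summing over the \(n\) vertices gives \(n\binom n{i+1}-\sum_w\binom{n-d_w}{i+1}\). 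Adding the leading \(-\binom n{i+1}\) produces \((n-1)\binom n{i+1}-\sum_w\binom{n-d_w}{i+1}\), which is exactly the remaining part of \eqref{eq:overlap-formula}.

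The main obstacle is bookkeeping rather than ideas. I need to check the signs and the \(r=1\) adjustment carefully. I also need to confirm that the binomial convention \(\binom ab=0\) makes the Vandermonde-type identity valid when \(d_w>i+1\) or \(n-d_w<i+1\); the truncation at \(r\le i+1\) is harmless because \(\binom{n-r}{i+1-r}=0\) for \(r>i+1\).
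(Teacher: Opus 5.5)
Your proof is correct and follows essentially the same route as the paper: you double count to get \(q_r(G)=\sum_{v}\binom{d_v}{r}-\varepsilon_r(G)\) for \(r\ge2\), use \(q_1(G)=n\) (which accounts for the \((n-D)\binom{n-1}{i}\) term), and substitute into the face-count Betti formula. The only difference is the order of operations. The paper substitutes into the face enumerator, where \((1-t)^n(1+\frac{t}{1-t})^{d_v}=(1-t)^{n-d_v}\) does the work. You extract coefficients first and then invoke the equivalent identity \(\sum_{r}(-1)^r\binom{d}{r}\binom{n-r}{i+1-r}=\binom{n-d}{i+1}\).
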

	
	\begin{proof}
		For \(r\ge2\), double counting the pairs \((A,v)\) with \(|A|=r\) and
		\(A\subseteq N_G(v)\), first by \(v\) and then by \(A\), gives
		\[
		\sum_{v\in V}\binom{d_v}{r}
		=
		\sum_{\substack{A\subseteq V\\|A|=r}}\mu_G(A)
		=q_r(G)+\varepsilon_r(G).
		\]
		Consequently, the face enumerator satisfies
		\begin{align*}
			Q_G(z)
			&=1+nz+
			\sum_{r\ge2}
			\left(\sum_{v\in V}\binom{d_v}{r}-\varepsilon_r(G)\right)z^r\\
			&=1-n+\sum_{v\in V}(1+z)^{d_v}
			+(n-D)z-\sum_{r\ge2}\varepsilon_r(G)z^r.
		\end{align*}
		
		Substituting \(z=t/(1-t)\) and multiplying by \((1-t)^n\) gives
		\begin{align*}
			(1-t)^nQ_G\left(\frac{t}{1-t}\right)
			={}&(1-n)(1-t)^n
			+\sum_{v\in V}(1-t)^{n-d_v}\\
			&+(n-D)t(1-t)^{n-1}
			-\sum_{r\ge2}\varepsilon_r(G)t^r(1-t)^{n-r}.
		\end{align*}
		The coefficient of \(t^{i+1}\) is
		\((-1)^i\beta_{i,i+1}(\Bbbk[\NN(G)])\). Extracting this coefficient and
		multiplying by \((-1)^i\) yields
		\begin{align*}
			\beta_{i,i+1}(\Bbbk[\NN(G)])
			={}&(n-1)\binom n{i+1}
			+(n-D)\binom{n-1}{i}
			-\sum_{v\in V}\binom{n-d_v}{i+1}\\
			&+\sum_{r=2}^{i+1}
			(-1)^r\varepsilon_r(G)\binom{n-r}{i+1-r},
		\end{align*}
		which is \eqref{eq:overlap-formula}.
	\end{proof}

	Here a graph without \(K_{2,3}\) means that it contains no subgraph
	isomorphic to \(K_{2,3}\); the subgraph need not be induced.

	\begin{corollary}\label{cor:k23-free-formula}
		Suppose that \(I_{\NN(G)}\) has a \(2\)-linear resolution and that \(G\)
		contains no \(K_{2,3}\) as a subgraph.
		Let \(c_4(G)\) denote the number of \(4\)-cycles of \(G\). Then
		\[
		\varepsilon_2(G)=2c_4(G),
		\qquad
		\varepsilon_r(G)=0\quad\text{for every }r\ge3.
		\]
		Consequently, for \(1\le i\le n-1\),
		\begin{align}
		\beta_{i,i+1}(\Bbbk[\NN(G)])
		={}&(n-1)\binom n{i+1}
		+(n-D)\binom{n-1}{i}
		-\sum_{v\in V}\binom{n-d_v}{i+1}\notag\\
		&+2c_4(G)\binom{n-2}{i-1},
		\label{eq:k23-free-formula}
		\end{align}
		where \(d_v=\deg_G(v)\) and \(D=\sum_{v\in V}d_v\).
	\end{corollary}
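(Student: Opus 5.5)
The plan is to compute the overlaps \(\varepsilon_r(G)\) directly from the absence of \(K_{2,3}\), and then substitute them into Proposition~\ref{prop:overlap-formula}. By Theorem~\ref{thm:structural}, \(G\) is bipartite, and it is simple.

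For \(r\ge3\), suppose \(|A|=r\) and \(\mu_G(A)\ge2\). Pick three vertices of \(A\) and two common neighbors \(w_1,w_2\). The \(w_j\) are distinct from the vertices of \(A\), since no vertex is its own neighbor. Together these five vertices span a \(K_{2,3}\) subgraph, which is excluded. Hence \(\mu_G(A)\le1\), so every summand of \(\varepsilon_r\) vanishes and \(\varepsilon_r(G)=0\).

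For \(r=2\), the same argument gives \(\mu_G(\{a,b\})\le2\) for every pair. Otherwise \(a,b\) and three common neighbors would form a \(K_{2,3}\). Therefore \(\varepsilon_2(G)\) equals the number of unordered pairs \(\{a,b\}\) with exactly two common neighbors.

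Each such pair \(\{a,b\}\) with common neighbors \(\{w_1,w_2\}\) determines the \(4\)-cycle \(aw_1bw_2a\). Conversely, every \(4\)-cycle \(v_1v_2v_3v_4v_1\) has two diagonal pairs, \(\{v_1,v_3\}\) and \(\{v_2,v_4\}\). Each diagonal pair has at least two common neighbors, namely the other two vertices of the cycle, so by the bound above it has exactly two. This gives a map from \(4\)-cycles to pairs that is exactly two-to-one:
\begin{itemize}
\item a pair with exactly two common neighbors \(w_1,w_2\) arises only from the single cycle \(aw_1bw_2a\), as one of its diagonals;
\item each cycle contributes exactly two diagonal pairs.
\end{itemize}
Hence \(\varepsilon_2(G)=2c_4(G)\), where a \(4\)-cycle is counted as a subgraph, that is, by its vertex set together with its edges. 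In a bipartite graph a \(4\)-set carries at most one \(4\)-cycle unless it spans a \(K_4\), which is impossible, so there is no ambiguity in this count.

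Substituting into \eqref{eq:overlap-formula}, only the \(r=2\) term of the overlap sum survives. It contributes
\[
(+1)\,\varepsilon_2(G)\binom{n-2}{i-1}=2c_4(G)\binom{n-2}{i-1},
\]
which gives \eqref{eq:k23-free-formula}. For \(i=1\), the upper limit \(i+1=2\) still includes the \(r=2\) term, so the formula holds for all \(1\le i\le n-1\).

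The main point needing care is the two-to-one correspondence between pairs and \(4\)-cycles, and in particular making sure each \(4\)-cycle is counted once rather than by its labelings. Everything else is direct substitution.
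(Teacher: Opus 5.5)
Your proposal is correct and follows essentially the same route as the paper. The exclusion of $K_{2,3}$ forces $\varepsilon_r(G)=0$ for $r\ge3$ and $\mu_G(\{a,b\})\le2$ for pairs, the correspondence between diagonal pairs and $4$-cycles gives $\varepsilon_2(G)=2c_4(G)$, and substitution into Proposition~\ref{prop:overlap-formula} leaves only the $r=2$ term; your double count is more explicit than the paper's, though strictly it is the map sending a pair to the unique $4$-cycle having it as a diagonal that is two-to-one, which is exactly what your two bullet points establish.
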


	\begin{proof}
		If a set of at least three vertices had two common neighbors, then any three
		of its vertices together with two such neighbors would form a \(K_{2,3}\).
		Hence \(\varepsilon_r(G)=0\) for every \(r\ge3\).

		By Theorem~\ref{thm:structural}, \(G\) is bipartite. Since \(G\) also
		contains no \(K_{2,3}\), two vertices have at most two
		common neighbors. A pair has exactly two common neighbors precisely when the
		pair and those neighbors form a \(4\)-cycle. Each \(4\)-cycle contributes its
		two pairs of opposite vertices, and each such pair contributes one to
		\(\varepsilon_2(G)\). Thus \(\varepsilon_2(G)=2c_4(G)\). Formula
		\eqref{eq:k23-free-formula} now follows from
		Proposition~\ref{prop:overlap-formula}.
	\end{proof}

	\begin{example}\label{ex:degree-counterexample}
		Let \(X=\{x_1,x_2,x_3\}\) and
		\(Y=\{y_1,y_2,y_3,y_4\}\). Define two bipartite graphs by
		\[
		\begin{array}{c|ccc}
			&N(x_1)&N(x_2)&N(x_3)\\ \hline
			G_A&
			\{y_1,y_3,y_4\}&
			\{y_1,y_2,y_3\}&
			\{y_1,y_2\}\\[2pt]
			G_B&
			\{y_1,y_3,y_4\}&
			\{y_1,y_3,y_4\}&
			\{y_1,y_2\}.
		\end{array}
		\]
		The graphs are shown in Figure~\ref{fig:degree-counterexample}.
		
		\begin{figure}[ht]
			\centering
\begin{tikzpicture}[
				scale=0.82,
				xvertex/.style={circle,draw,minimum size=6.5mm,inner sep=1pt},
				yvertex/.style={rectangle,rounded corners=1.5pt,draw,
					minimum size=6.5mm,inner sep=1pt},
				graphedge/.style={thick}
				]
				
				\begin{scope}[xshift=-3.6cm]
					\node at (1.45,2.45) {$G_A$};
					\node[xvertex] (Ax1) at (0,1.5) {$x_1$};
					\node[xvertex] (Ax2) at (0,0) {$x_2$};
					\node[xvertex] (Ax3) at (0,-1.5) {$x_3$};
					\node[yvertex] (Ay4) at (2.9,1.8) {$y_4$};
					\node[yvertex] (Ay3) at (2.9,0.6) {$y_3$};
					\node[yvertex] (Ay1) at (2.9,-0.6) {$y_1$};
					\node[yvertex] (Ay2) at (2.9,-1.8) {$y_2$};
					\draw[graphedge] (Ax1)--(Ay1) (Ax1)--(Ay3) (Ax1)--(Ay4);
					\draw[graphedge] (Ax2)--(Ay1) (Ax2)--(Ay2) (Ax2)--(Ay3);
					\draw[graphedge] (Ax3)--(Ay1) (Ax3)--(Ay2);
				\end{scope}
				
				\begin{scope}[xshift=3.6cm]
					\node at (1.45,2.45) {$G_B$};
					\node[xvertex] (Bx1) at (0,1.5) {$x_1$};
					\node[xvertex] (Bx2) at (0,0) {$x_2$};
					\node[xvertex] (Bx3) at (0,-1.5) {$x_3$};
					\node[yvertex] (By4) at (2.9,1.8) {$y_4$};
					\node[yvertex] (By3) at (2.9,0.6) {$y_3$};
					\node[yvertex] (By1) at (2.9,-0.6) {$y_1$};
					\node[yvertex] (By2) at (2.9,-1.8) {$y_2$};
					\draw[graphedge] (Bx1)--(By1) (Bx1)--(By3) (Bx1)--(By4);
					\draw[graphedge] (Bx2)--(By1) (Bx2)--(By3) (Bx2)--(By4);
					\draw[graphedge] (Bx3)--(By1) (Bx3)--(By2);
				\end{scope}
			\end{tikzpicture}
			\caption{Two graphs with the same degree sequences but different overlap data}
			\label{fig:degree-counterexample}
		\end{figure}
		
		Both graphs are connected and chordal bipartite. In \(G_A\), the vertex
		\(y_4\) is pendant, and the only possible \(6\)-cycle on the remaining
		vertices has a chord through \(x_2\). In \(G_B\), the vertex \(y_2\) is
		pendant, and every cycle has length at most four. Hence
		Corollary~\ref{cor:chordal-bipartite} applies.
		
		The graphs have the same degree sequences on both sides:
		\[
		(d(x_1),d(x_2),d(x_3))=(3,3,2),\qquad
		\{d(y):y\in Y\}=\{3,2,2,1\}.
		\]
		However,
		\[
		\begin{array}{c|ccc|cc}
			&q_1&q_2&q_3&\varepsilon_2&\varepsilon_3\\ \hline
			G_A&7&8&3&4&0\\
			G_B&7&7&2&5&1
		\end{array}
		\]
		and \(q_r=0\) for \(r\ge4\). Proposition~\ref{prop:q-formula} therefore gives
		\[
		\bigl(\beta_{i,i+1}(\Bbbk[\NN(G_A)])\bigr)_{i=1}^{6}
		=(13,33,37,22,7,1)
		\]
		and
		\[
		\bigl(\beta_{i,i+1}(\Bbbk[\NN(G_B)])\bigr)_{i=1}^{6}
		=(14,37,43,26,8,1).
		\]
		Thus neither \(n\), \(c\), nor the degree sequences of the two color
		classes determine the Betti table, even within the class of chordal bipartite
		graphs.
	\end{example}
	
	\section{Cactus graphs, pseudoforests, and forest rigidity}\label{sec:pseudoforests}

	Theorem~\ref{thm:cactus-characterization} identifies the cactus graphs in the
	\(2\)-linear class. Such graphs contain no \(K_{2,3}\), so
	Corollary~\ref{cor:k23-free-formula} reduces their overlap corrections to the
	number of \(4\)-cycles. The cactus block structure then gives a closed formula
	and shows how much degree data the Betti table retains. Pseudoforests and
	forests are successive specializations.

	\subsection{The cactus formula}

	For a cactus graph \(G\), let \(s(G)\) denote the number of its cycles.

	\begin{lemma}\label{lem:cactus-overlaps}
		Let \(G\) be a cactus graph in which every cycle has length four. Then
		\[
		\varepsilon_2(G)=2s(G),
		\qquad
		\varepsilon_r(G)=0\quad\text{for every }r\ge3.
		\]
		If \(G\) has \(n\) vertices and \(c\) connected components, then
		\[
		|E(G)|=n-c+s(G).
		\]
	\end{lemma}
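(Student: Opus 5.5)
The plan is to obtain the overlap values from Corollary~\ref{cor:k23-free-formula} and the edge count from the block structure of a cactus graph.

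\emph{Overlaps.} First we check that \(G\) contains no \(K_{2,3}\) as a subgraph. In \(K_{2,3}\) with parts \(\{a,b\}\) and \(\{u,v,w\}\), the cycles \(auxvb\ldots\) are not quite right as written; take instead the cycles \(aubva\) and \(aubwa\). These are two distinct cycles sharing the three vertices \(a,u,b\), which contradicts the cactus property. The hypothesis that every cycle has length four makes \(G\) bipartite, since all cycles are even. So the bipartiteness and \(K_{2,3}\)-freeness used in the proof of Corollary~\ref{cor:k23-free-formula} are available.

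We then rerun that counting argument directly, rather than invoking the corollary, since the corollary presupposes \(2\)-linearity; Theorem~\ref{thm:cactus-characterization} would supply it anyway once isolated vertices are excluded. The argument goes as follows. A set of at least three vertices with two common neighbors yields a \(K_{2,3}\), so \(\varepsilon_r(G)=0\) for \(r\ge3\). In a bipartite \(K_{2,3}\)-free graph, a pair has at most two common neighbors, and exactly two precisely when it is an opposite pair of a \(4\)-cycle. Each \(4\)-cycle contains two opposite pairs, and distinct \(4\)-cycles give distinct pairs, because a pair determines its two common neighbors and hence the cycle. Therefore \(\varepsilon_2(G)=2c_4(G)\). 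Since every cycle has length four, \(c_4(G)=s(G)\).

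\emph{Edge count.} We induct on \(s(G)\). If \(s=0\), then \(G\) is a forest and \(|E|=n-c\). If \(s\ge1\), choose a cycle and delete one of its edges \(e\). The deletion preserves the number of components, because \(e\) lies on a cycle. It destroys exactly one cycle: any other cycle through \(e\) would share the two endpoints of \(e\) with the chosen cycle, which contradicts the cactus property. It also creates no new cycles. The resulting graph is still a cactus with all cycles of length four, so induction gives \(|E(G)|-1=n-c+s(G)-1\), and the formula follows.

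The only delicate point is the claim that deleting \(e\) removes exactly one cycle, which rests on the fact that two cycles in a cactus share at most one vertex, hence no edge.
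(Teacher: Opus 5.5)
Your proof is correct and essentially follows the paper's argument. The paper rules out a third common neighbor of a pair, and two common neighbors of a set of three or more vertices, by exhibiting two $4$-cycles sharing more than one vertex, which is exactly your $K_{2,3}$ obstruction; its edge count deletes one edge from each (edge-disjoint) cycle at once, which your induction on $s(G)$ does one cycle at a time. You are right to rerun the counting rather than cite Corollary~\ref{cor:k23-free-formula}, since the lemma does not assume $2$-linearity. Just delete the stray false start about ``$auxvb\ldots$'' that precedes the correct cycles $aubva$ and $aubwa$.
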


	\begin{proof}
		Suppose that two vertices have two distinct common neighbors. These four
		vertices form a \(4\)-cycle. They cannot have a third common neighbor, since
		two of the resulting \(4\)-cycles would share more than one vertex. Thus each
		pair contributing to \(\varepsilon_2(G)\) contributes exactly one.

		Each \(4\)-cycle gives two such pairs, namely its two pairs of opposite
		vertices. Conversely, every pair with two common neighbors determines a
		unique \(4\)-cycle. Hence \(\varepsilon_2(G)=2s(G)\).

		If a set of at least three vertices had two common neighbors, then two
		different pairs in that set, together with the common neighbors, would produce
		two \(4\)-cycles sharing more than one vertex. Therefore
		\(\varepsilon_r(G)=0\) for every \(r\ge3\).

		Finally, delete one edge from each cycle. Every deletion preserves the number
		of connected components, and the resulting graph is a forest. It therefore
		has \(n-c\) edges, which gives \(|E(G)|-s(G)=n-c\).
	\end{proof}

	\begin{theorem}\label{thm:cactus-formula}
		Let \(G\) be a cactus graph without isolated vertices, and suppose that every
		cycle of \(G\) has length four. Set
		\[
		n=|V(G)|,\qquad c=c(G),\qquad s=s(G),\qquad d_v=\deg_G(v).
		\]
		Apart from \(\beta_{0,0}=1\), the only nonzero graded Betti numbers of
		\(\Bbbk[\NN(G)]\) are \(\beta_{i,i+1}\), \(1\le i\le n-1\), and
		\begin{equation}\label{eq:cactus-formula}
		\begin{aligned}
		\beta_{i,i+1}(\Bbbk[\NN(G)])
		={}&(n-1)\binom{n}{i+1}
		+(2c-n)\binom{n-1}{i}\\
		&-\sum_{v\in V(G)}\binom{n-d_v}{i+1}
		-2s\binom{n-2}{i}.
		\end{aligned}
		\end{equation}
		Equivalently, if \(D=\sum_{v\in V(G)}d_v\), then
		\begin{equation}\label{eq:cactus-degree-formula}
		\begin{aligned}
		\beta_{i,i+1}(\Bbbk[\NN(G)])
		={}&(n-1)\binom{n}{i+1}
		+(2c-n)\binom{n-1}{i}\\
		&+(2n-2c-D)\binom{n-2}{i}
		-\sum_{v\in V(G)}\binom{n-d_v}{i+1}.
		\end{aligned}
		\end{equation}
	\end{theorem}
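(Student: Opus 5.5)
The plan is to specialize Proposition~\ref{prop:overlap-formula} using the data supplied by Lemma~\ref{lem:cactus-overlaps}.

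First, the hypotheses are exactly those of Theorem~\ref{thm:cactus-characterization}: $G$ is a cactus graph without isolated vertices in which every cycle has length four. That theorem shows that $I_{\NN(G)}$ has a $2$-linear resolution. Theorem~\ref{thm:all-betti} then shows that, apart from $\beta_{0,0}=1$, only the entries $\beta_{i,i+1}$ with $1\le i\le n-1$ can be nonzero. This settles the vanishing assertion, and it also places us in the setting of Section~\ref{sec:overlap}, so formula~\eqref{eq:overlap-formula} applies.

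Next, I insert the cactus data into~\eqref{eq:overlap-formula}. Lemma~\ref{lem:cactus-overlaps} gives $\varepsilon_2(G)=2s$ and $\varepsilon_r(G)=0$ for $r\ge3$. Hence the overlap sum reduces to the single term $2s\binom{n-2}{i-1}$; this is the same simplification as in Corollary~\ref{cor:k23-free-formula}. The same lemma gives $|E(G)|=n-c+s$, so by the handshake lemma $D=2(n-c+s)$. Substituting, the linear coefficient becomes
\[
n-D=(2c-n)-2s.
\]
The formula therefore reads
\[
(n-1)\binom n{i+1}+(2c-n)\binom{n-1}{i}-\sum_{v}\binom{n-d_v}{i+1}-2s\left[\binom{n-1}{i}-\binom{n-2}{i-1}\right].
\]

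Then Pascal's rule gives $\binom{n-1}{i}-\binom{n-2}{i-1}=\binom{n-2}{i}$. This yields~\eqref{eq:cactus-formula}, provided one checks the boundary cases of the convention $\binom ab=0$. The only boundary value in range is $i=n-1$, where both sides of Pascal's identity equal $0$, so no adjustment is needed.

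Finally, for~\eqref{eq:cactus-degree-formula}, I eliminate $s$ using $2s=D-2n+2c$. This replaces $-2s$ by $2n-2c-D$, and the second formula follows.

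The only delicate point is the bookkeeping that converts $n-D$ and $\varepsilon_2$ into the displayed coefficients via the edge count. Everything else is a direct substitution into earlier results.
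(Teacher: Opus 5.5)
Your proposal is correct and follows the paper's proof essentially step for step. You obtain $2$-linearity from Theorem~\ref{thm:cactus-characterization}, substitute $D=2(n-c+s)$, $\varepsilon_2=2s$ and $\varepsilon_r=0$ for $r\ge3$ from Lemma~\ref{lem:cactus-overlaps} into \eqref{eq:overlap-formula}, apply $\binom{n-1}{i}-\binom{n-2}{i-1}=\binom{n-2}{i}$, and eliminate $s$ via $2s=D-2n+2c$; your explicit appeal to Theorem~\ref{thm:all-betti} for the vanishing assertion and your boundary check at $i=n-1$ are harmless additions that the paper leaves implicit.
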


	\begin{proof}
		Theorem~\ref{thm:cactus-characterization} gives a \(2\)-linear resolution.
		By Lemma~\ref{lem:cactus-overlaps},
		\[
		D=2|E(G)|=2(n-c+s),\qquad
		\varepsilon_2(G)=2s,
		\qquad
		\varepsilon_r(G)=0\quad(r\ge3).
		\]
		Substitution in \eqref{eq:overlap-formula} gives
		\begin{align*}
		\beta_{i,i+1}
		={}&(n-1)\binom{n}{i+1}
		+(2c-n-2s)\binom{n-1}{i}\\
		&-\sum_{v\in V(G)}\binom{n-d_v}{i+1}
		+2s\binom{n-2}{i-1}.
		\end{align*}
		The identity
		\[
		\binom{n-1}{i}-\binom{n-2}{i-1}=\binom{n-2}{i}
		\]
		yields \eqref{eq:cactus-formula}. Since
		\(2s=D-2n+2c\), formula \eqref{eq:cactus-degree-formula} follows.
	\end{proof}

	A graph is a \emph{pseudoforest} if each connected component contains at most
	one cycle. Thus every pseudoforest is a cactus graph, and forests are precisely
	the acyclic pseudoforests. The cactus formula has the following immediate
	specialization.

	\begin{corollary}\label{cor:pseudoforest}
		Let \(P\) be a pseudoforest without isolated vertices. Then
		\(I_{\NN(P)}\) has a \(2\)-linear resolution if and only if every cycle of
		\(P\) has length four.

		Assume these equivalent conditions. Let \(n=|V(P)|\), let \(c=c(P)\), let
		\(u\) be the number of unicyclic components, and write
		\(d_v=\deg_P(v)\). Apart from \(\beta_{0,0}=1\), the only nonzero graded
		Betti numbers are \(\beta_{i,i+1}\), \(1\le i\le n-1\), and
		\begin{equation}\label{eq:pseudoforest-formula}
		\begin{aligned}
		\beta_{i,i+1}(\Bbbk[\NN(P)])
		={}&(n-1)\binom{n}{i+1}
		+(2c-n)\binom{n-1}{i}\\
		&-\sum_{v\in V(P)}\binom{n-d_v}{i+1}
		-2u\binom{n-2}{i}.
		\end{aligned}
		\end{equation}
	\end{corollary}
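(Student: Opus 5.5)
The plan is to deduce both parts of the corollary directly from the cactus results.

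First, every pseudoforest is a cactus graph. Two cycles in the same component cannot exist, and cycles in different components are disjoint, so any two cycles share at most one vertex. Since \(P\) has no isolated vertices, Theorem~\ref{thm:cactus-characterization} applies verbatim. It gives the equivalence between \(2\)-linearity of \(I_{\NN(P)}\) and every cycle of \(P\) having length four.

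Second, assume these conditions hold. Theorem~\ref{thm:cactus-formula} then applies to \(P\). It already asserts that, apart from \(\beta_{0,0}=1\), the only nonzero graded Betti numbers are \(\beta_{i,i+1}\) for \(1\le i\le n-1\), and it supplies formula~\eqref{eq:cactus-formula} with \(s=s(P)\). It remains to identify \(s(P)\) with \(u\). Each component of a pseudoforest has at most one cycle. A component therefore contributes exactly one cycle if it is unicyclic and none if it is a tree, so \(s(P)=u\). Substituting this into \eqref{eq:cactus-formula} yields \eqref{eq:pseudoforest-formula}.

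There is no real obstacle here. The only point needing care is the cactus property, namely that distinct cycles of a pseudoforest meet in at most one vertex. This holds vacuously within a component and trivially across components.
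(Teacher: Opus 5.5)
Your proposal is correct and follows the paper's proof: the equivalence comes from Theorem~\ref{thm:cactus-characterization}, because every pseudoforest is a cactus graph, and the formula comes from Theorem~\ref{thm:cactus-formula} with \(s(P)=u\). Your check that distinct cycles of a pseudoforest lie in different components, and are therefore disjoint, spells out a step the paper only asserts.
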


	\begin{proof}
		The characterization follows from Theorem~\ref{thm:cactus-characterization}.
		Since a pseudoforest has \(s(P)=u\), the formula follows from
		Theorem~\ref{thm:cactus-formula}.
	\end{proof}

	\subsection{Degree information for cactus graphs}

	For \(d\ge1\), write
	\[
	h_d(G)=\bigl|\{v\in V(G):\deg_G(v)=d\}\bigr|.
	\]
	The cactus formula does not retain the full rigidity of the forest case. The
	following result identifies exactly which degree data remain visible.

	\begin{theorem}\label{thm:cactus-rigidity}
		Let \(G\) and \(G'\) be cactus graphs without isolated vertices whose cycles
		all have length four. The Betti tables of their neighborhood complexes agree
		if and
		only if
		\[
		|V(G)|=|V(G')|,\qquad c(G)=c(G'),
		\]
		and
		\[
		h_d(G)=h_d(G')\qquad\text{for every }d\ge3.
		\]
	\end{theorem}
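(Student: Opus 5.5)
The plan is to use formula \eqref{eq:cactus-degree-formula} from Theorem~\ref{thm:cactus-formula}. That formula expresses the whole Betti table through $n$, $c$, $D$ and the multiset of degrees. Degrees $1$ and $2$ turn out to be absorbed into the other terms, and the task is to make this precise.

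Rewrite the formula in terms of $h_d$. Grouping vertices by degree, the degree-dependent part is
\[
(2n-2c-D)\binom{n-2}{i}-\sum_{d\ge1}h_d\binom{n-d}{i+1}.
\]
Substituting $n=\sum_d h_d$ and $D=\sum_d d\,h_d$ gives the coefficient of each $h_d$. For $d=1,2$ we use $\binom{n-1}{i+1}$ and $\binom{n-2}{i+1}$ together with the Pascal identities $\binom{n-1}{i+1}=\binom{n-2}{i+1}+\binom{n-2}{i}$ and $\binom{n-2}{i}+\binom{n-2}{i+1}=\binom{n-1}{i+1}$.

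The key identity to verify is this. With $n$ fixed, the expression $2n\binom{n-2}{i}-h_1\bigl(\binom{n-2}{i}+\binom{n-1}{i+1}\bigr)-2h_2\binom{n-2}{i}-h_2\binom{n-2}{i+1}$, after inserting $h_1+h_2=n-\sum_{d\ge3}h_d$, depends on $h_1,h_2$ only through $h_1+h_2$. The coefficient of $h_1$ is $-\binom{n-2}{i}-\binom{n-1}{i+1}=-2\binom{n-2}{i}-\binom{n-2}{i+1}$, which equals the coefficient of $h_2$. Hence
\[
\beta_{i,i+1}=F_i(n,c)+\sum_{d\ge3}h_d\Bigl[(2+d)\binom{n-2}{i}-d\binom{n-2}{i}\cdots\Bigr],
\]
which we write more cleanly as
\[
\beta_{i,i+1}=A_i(n,c)+\sum_{d\ge3}h_d\,\gamma_{i,d}(n),\qquad
\gamma_{i,d}(n)=2\binom{n-2}{i}+\binom{n-2}{i+1}-d\binom{n-2}{i}-\binom{n-d}{i+1}.
\]
Here $A_i(n,c)$ collects the terms depending only on $n$ and $c$. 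This proves the ``if'' direction, since equal $n$, $c$ and $h_d$ for $d\ge3$ give equal tables.

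For the converse, the Betti table determines $n$ through its length: the projective dimension is $n-1$ by Theorem~\ref{thm:terminal}. It determines $c$ through $\beta_{n-1,n}=2c-1$. Subtracting $A_i(n,c)$ then leaves the vector $\bigl(\sum_{d\ge3}h_d\gamma_{i,d}(n)\bigr)_{i}$. Alternatively, and more cleanly, we can use the generating function. By Corollary~\ref{cor:q-recovery} the table determines the face polynomial $Q_G(z)$. From the proof of Proposition~\ref{prop:overlap-formula},
\[
Q_G(z)=1-n+\sum_v(1+z)^{d_v}+(n-D)z-2s z^2.
\]
Using $2s=D-2n+2c$, this becomes $1-n+(n-D)z-(D-2n+2c)z^2+\sum_d h_d(1+z)^d$. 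Comparing coefficients of $z^r$ for $r\ge3$ gives $q_r=\sum_{d\ge r}h_d\binom dr$. This triangular system recovers $h_d$ for all $d\ge3$ by downward induction from the maximum degree. This route is what I would actually write, since it replaces the binomial identities with coefficient extraction. The recovery of $n$ and $c$ goes as above.

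I expect the main obstacle to be bookkeeping. We must confirm that the lower coefficients $q_0,q_1,q_2$ impose no further constraints and carry no hidden information. For the ``if'' direction, we must show that $q_2$ is determined by $n$, $c$ and the $h_d$ with $d\ge3$. Indeed $q_2=\sum_d h_d\binom d2-(D-2n+2c)$, and $h_2\binom22=h_2$ cancels against the $2h_2$ in $D$ together with $-2n$ once $h_1+h_2$ is expressed via $n$. Explicitly, $q_2=h_2+\sum_{d\ge3}\binom d2h_d-h_1-2h_2-\sum_{d\ge3}dh_d+2n-2c$. Substituting $h_1+h_2=n-\sum_{d\ge3}h_d$ yields $q_2=n-2c+\sum_{d\ge3}\bigl(\binom d2-d+1\bigr)h_d$. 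Since $q_0=1$ and $q_1=n$, equality of $n$, $c$ and the $h_{\ge3}$ gives equal face vectors, and Proposition~\ref{prop:q-formula} then gives equal Betti tables.
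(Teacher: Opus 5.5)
Your proposal is correct and, in the generating-function form you say you would actually write, it is essentially the paper's proof: you derive \(q_r=\sum_{d\ge r}h_d\binom dr\) for \(r\ge3\) and \(q_2=n-2c+\sum_{d\ge3}\bigl(\binom d2-d+1\bigr)h_d\), which is the paper's formula because \(\binom d2-d+1=\binom{d-1}{2}\). You then use Proposition~\ref{prop:q-formula} for the ``if'' direction, and for the converse you read \(n\) and \(c\) off the terminal entry and invert the triangular system in the face numbers. Your preliminary computation of the coefficients \(\gamma_{i,d}(n)\) is also correct but not needed, and you should drop the garbled intermediate display containing ``\(\cdots\)''.
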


	\begin{proof}
		For \(r\ge3\), Lemma~\ref{lem:cactus-overlaps} gives
		\[
		q_r(G)=\sum_{d\ge r}h_d(G)\binom{d}{r}.
		\]
		For \(r=2\), the same lemma and the identity
		\(2s=\sum_v d_v-2n+2c\) give
		\begin{equation}\label{eq:cactus-q2}
		q_2(G)
		=n-2c+\sum_{d\ge3}h_d(G)\binom{d-1}{2}.
		\end{equation}
		Thus \(n\), \(c\), and the numbers \(h_d(G)\) for \(d\ge3\) determine all
		the face numbers \(q_r(G)\). Proposition~\ref{prop:q-formula} then determines
		the Betti table.

		Conversely, suppose that the Betti tables agree. By
		Theorem~\ref{thm:terminal}, the terminal homological degree determines \(n\),
		and the terminal entry determines
		\[
		c=\frac{\beta_{n-1,n}+1}{2}.
		\]
		Formula~\eqref{eq:q-formula} is triangular in the values \(q_r(G)\), so the
		Betti table determines every \(q_r(G)\). For \(d\ge3\), binomial inversion
		then gives
		\begin{equation}\label{eq:cactus-degree-inversion}
		h_d(G)=
		\sum_{r=d}^{n-1}
		(-1)^{r-d}\binom{r}{d}q_r(G).
		\end{equation}
		Hence the table determines \(h_d(G)\) for every \(d\ge3\), proving the
		converse.
	\end{proof}

	\begin{corollary}\label{cor:cactus-low-degrees}
		Let \(G\) be a cactus graph without isolated vertices whose cycles all have
		length four, and write
		\[
		n=|V(G)|,\qquad c=c(G),\qquad s=s(G).
		\]
		Then
		\begin{align}
		h_1(G)
		&=2c+\sum_{d\ge3}(d-2)h_d(G)-2s,
		\label{eq:cactus-h1}\\
		h_2(G)
		&=n-2c-\sum_{d\ge3}(d-1)h_d(G)+2s.
		\label{eq:cactus-h2}
		\end{align}
		Consequently, the Betti table together with \(s(G)\) determines the complete
		degree sequence. If \(G'\) is another such cactus graph with the same Betti
		table, then
		\[
		h_1(G)-h_1(G')=-2\bigl(s(G)-s(G')\bigr)
		\]
		and
		\[
		h_2(G)-h_2(G')=2\bigl(s(G)-s(G')\bigr).
		\]
	\end{corollary}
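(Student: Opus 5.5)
Two linear identities in the unknowns \(h_1,h_2\) determine them, and we solve that system. The first is the vertex count \(\sum_{d\ge1}h_d(G)=n\), which holds because \(G\) has no isolated vertices. The second is the handshake identity \(\sum_{d\ge1}d\,h_d(G)=2|E(G)|\). By Lemma~\ref{lem:cactus-overlaps}, \(|E(G)|=n-c+s\), so the second identity reads \(h_1+2h_2+\sum_{d\ge3}d\,h_d=2n-2c+2s\).

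Writing \(T_0=\sum_{d\ge3}h_d\) and \(T_1=\sum_{d\ge3}d\,h_d\), the system is
\[
h_1+h_2=n-T_0,\qquad h_1+2h_2=2n-2c+2s-T_1 .
\]
Subtracting the first equation from the second gives \(h_2=n-2c+2s-(T_1-T_0)\). Since \(T_1-T_0=\sum_{d\ge3}(d-1)h_d\), this is \eqref{eq:cactus-h2}. Then \(h_1=n-T_0-h_2=2c-2s+\sum_{d\ge3}(d-2)h_d\), which is \eqref{eq:cactus-h1}.

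For the consequences we rely on Theorem~\ref{thm:cactus-rigidity} and its proof. The Betti table determines \(n\), determines \(c=(\beta_{n-1,n}+1)/2\), and determines every \(h_d\) with \(d\ge3\). Once \(s\) is also known, \eqref{eq:cactus-h1} and \eqref{eq:cactus-h2} give \(h_1\) and \(h_2\), so the whole degree sequence is determined. If \(G'\) has the same Betti table, then \(G\) and \(G'\) share \(n\), \(c\) and all \(h_d\) with \(d\ge3\). Subtracting the formulas for \(G\) and \(G'\) therefore leaves only the \(s\)-terms, which yields the two displayed differences.

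No step is a real obstacle; the argument is bookkeeping. The only point needing care is that the edge count from Lemma~\ref{lem:cactus-overlaps} is used in the form \(D=2(n-c+s)\).
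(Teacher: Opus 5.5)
Your proposal is correct and matches the paper's proof: you solve the same linear system formed by the vertex count and the handshake identity with \(|E(G)|=n-c+s\) from Lemma~\ref{lem:cactus-overlaps}, and you get the consequences from Theorem~\ref{thm:cactus-rigidity}. Your algebra checks out, and you rightly point out that the absence of isolated vertices is what makes \(\sum_{d\ge1}h_d=n\).
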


	\begin{proof}
		The identities
		\[
		h_1+h_2+\sum_{d\ge3}h_d=n
		\]
		and
		\[
		h_1+2h_2+\sum_{d\ge3}d h_d
		=\sum_{v\in V(G)}d_v
		=2(n-c+s)
		\]
		form a linear system in \(h_1\) and \(h_2\). Solving it gives
		\eqref{eq:cactus-h1} and \eqref{eq:cactus-h2}. Theorem~\ref{thm:cactus-rigidity} shows that the Betti table determines
		\(n\), \(c\),
		and every \(h_d\) with \(d\ge3\), so the first consequence follows. The two
		difference formulas follow by comparing \eqref{eq:cactus-h1} and
		\eqref{eq:cactus-h2} for \(G\) and \(G'\).
	\end{proof}

	\begin{example}\label{ex:path-square-same-betti}
		The graphs \(P_4\) and \(C_4\) have degree sequences
		\[
		(2,2,1,1)
		\qquad\text{and}\qquad
		(2,2,2,2),
		\]
		respectively. Nevertheless, both are connected cactus graphs on four vertices
		with no vertex of degree at least three. Theorem~\ref{thm:cactus-rigidity}
		therefore gives the same Betti strand:
		\[
		\bigl(\beta_{1,2},\beta_{2,3},\beta_{3,4}\bigr)=(4,4,1).
		\]
		Here \(s(P_4)=0\) and \(s(C_4)=1\). Corollary~\ref{cor:cactus-low-degrees}
		explains the difference between their degree sequences: passing from \(P_4\)
		to \(C_4\) decreases \(h_1\) by two and increases \(h_2\) by two without
		changing the Betti table.
	\end{example}

	\subsection{The forest case}

	The cactus formula contains the forest formula as the case \(s=0\). The
	following proposition explains structurally why the overlap terms disappear
	exactly in the acyclic case.

	\begin{proposition}\label{prop:forest-zero-overlap}
		Let \(G\) be a chordal bipartite graph. The following are equivalent:
		\begin{enumerate}[label=\textup{(\roman*)}]
			\item \(G\) is a forest;
			\item \(\varepsilon_2(G)=0\);
			\item \(\varepsilon_r(G)=0\) for every \(r\ge2\).
		\end{enumerate}
	\end{proposition}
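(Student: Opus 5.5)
We will prove the cycle of implications (iii)\(\Rightarrow\)(ii)\(\Rightarrow\)(i)\(\Rightarrow\)(iii). The implication (iii)\(\Rightarrow\)(ii) is trivial, since (iii) contains (ii) as the case \(r=2\).

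For (ii)\(\Rightarrow\)(i) we argue by contrapositive. If \(G\) is not a forest, it contains a cycle, and hence an induced cycle, namely a shortest one. Because \(G\) is bipartite, this cycle has even length. Because \(G\) is chordal bipartite, it has no induced cycle of length at least six, so the shortest cycle is an induced \(C_4=aby\,c\,a\) with \(a,c\) opposite and \(b,y\) opposite. Then \(a\) and \(c\) have the two distinct common neighbors \(b\) and \(y\). Thus \(\mu_G(\{a,c\})\ge2\), and this pair contributes at least one to \(\varepsilon_2(G)\), giving \(\varepsilon_2(G)>0\).

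For (i)\(\Rightarrow\)(iii), assume that \(G\) is a forest and suppose some \(A\) with \(|A|\ge2\) has \(\mu_G(A)\ge2\). Choose distinct \(a,a'\in A\) and distinct common neighbors \(w,w'\) of all of \(A\). The vertices \(a,a'\) are distinct from \(w,w'\), since no vertex lies in its own neighborhood. Hence \(a\,w\,a'\,w'\,a\) is a \(4\)-cycle in \(G\), a contradiction. Every term \(\mu_G(A)-1\) with \(\mu_G(A)>0\) therefore vanishes, so \(\varepsilon_r(G)=0\) for all \(r\ge2\).

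There is no real obstacle here. The only point requiring care is checking that the four vertices in each step are distinct, so that they genuinely form a \(4\)-cycle. Chordal bipartiteness is used only in (ii)\(\Rightarrow\)(i), to guarantee that a cycle forces a \(4\)-cycle. Without this hypothesis, an induced \(C_6\) would have \(\varepsilon_2=0\).
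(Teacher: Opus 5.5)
Your proof is correct and rests on the same two facts as the paper's. First, a set with two distinct common neighbors yields a $4$-cycle. Second, in a chordal bipartite graph a shortest cycle is an induced $C_4$. You arrange these as the cycle (iii)$\Rightarrow$(ii)$\Rightarrow$(i)$\Rightarrow$(iii) instead of proving (ii)$\Leftrightarrow$(iii) and (i)$\Leftrightarrow$(ii) separately, which is only a difference in bookkeeping.

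One small slip: if $a,c$ and $b,y$ are the opposite pairs, the $4$-cycle should be written $a\,b\,c\,y\,a$, not $a\,b\,y\,c\,a$. This does not affect the argument.
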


	\begin{proof}
		Conditions \textup{(ii)} and \textup{(iii)} are equivalent. The equality
		\(\varepsilon_2(G)=0\) means that no pair of vertices has two distinct common
		neighbors. In a bipartite graph, two vertices with two distinct common
		neighbors, together with those neighbors, induce a \(C_4\). Hence
		\[
		\varepsilon_2(G)=0
		\quad\Longleftrightarrow\quad
		G\text{ is }C_4\text{-free}.
		\]
		If \(\varepsilon_r(G)>0\) for some \(r\ge2\), then an \(r\)-set has at least
		two common neighbors. Choosing two vertices of that set and two of its common
		neighbors again produces a \(C_4\). Therefore
		\(\varepsilon_r(G)=0\) for every \(r\ge2\) if and only if
		\(\varepsilon_2(G)=0\).

		This condition is also equivalent to \textup{(i)}. If a chordal
		bipartite graph contains a cycle, then a shortest cycle is induced. Chordal
		bipartiteness forces that induced cycle to have length four. Thus \(G\)
		contains no induced \(C_4\) exactly when it contains no cycle, or equivalently,
		exactly when \(G\) is a forest.
	\end{proof}

	\begin{corollary}\label{cor:forest-formula}
		Let \(F\) be a forest without isolated vertices. Set
		\[
		n=|V(F)|,\qquad c=c(F),\qquad d_v=\deg_F(v).
		\]
		Then, apart from \(\beta_{0,0}=1\), the only nonzero graded Betti numbers of
		\(\Bbbk[\NN(F)]\) are \(\beta_{i,i+1}\), \(1\le i\le n-1\), and
		\begin{equation}\label{eq:forest-formula}
			\beta_{i,i+1}(\Bbbk[\NN(F)])
			=
			(n-1)\binom{n}{i+1}
			+(2c-n)\binom{n-1}{i}
			-\sum_{v\in V(F)}\binom{n-d_v}{i+1}.
		\end{equation}
	\end{corollary}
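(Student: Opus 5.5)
This is the case \(s=0\) of Theorem~\ref{thm:cactus-formula}, and the plan is to verify the hypotheses and substitute. A forest has no cycles, so any two cycles trivially share at most one vertex. Hence \(F\) is a cactus graph, and the condition that every cycle has length four holds vacuously. Since \(F\) has no isolated vertices, Theorem~\ref{thm:cactus-characterization} gives a \(2\)-linear resolution of \(I_{\NN(F)}\). Theorem~\ref{thm:all-betti} then shows that, apart from \(\beta_{0,0}=1\), only the entries \(\beta_{i,i+1}\) with \(1\le i\le n-1\) can be nonzero.

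For the formula, apply \eqref{eq:cactus-formula} with \(s(F)=0\). The term \(-2s\binom{n-2}{i}\) vanishes, and what remains is exactly \eqref{eq:forest-formula}.

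As an independent check that does not pass through the cactus machinery, one can use Proposition~\ref{prop:overlap-formula} directly. Every forest is chordal bipartite, so Proposition~\ref{prop:forest-zero-overlap} gives \(\varepsilon_r(F)=0\) for all \(r\ge2\), and the overlap sum drops out. The edge count of a forest is \(|E(F)|=n-c\), so \(D=\sum_v d_v=2(n-c)\) and therefore \(n-D=2c-n\). Substituting these two facts into \eqref{eq:overlap-formula} yields \eqref{eq:forest-formula} again.

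There is no real obstacle here. The only care needed is to confirm that the vacuous cycle hypothesis legitimately places forests inside Theorem~\ref{thm:cactus-formula}, and that the no-isolated-vertex assumption supplies the \(2\)-linearity that all the Betti formulas presuppose.
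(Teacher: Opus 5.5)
Your proposal is correct and follows the paper's proof, which simply specializes Theorem~\ref{thm:cactus-formula} to \(s=0\). Your second verification through Proposition~\ref{prop:overlap-formula}, Proposition~\ref{prop:forest-zero-overlap}, and \(n-D=2c-n\) is also valid.
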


	\begin{proof}
		A forest is a cactus graph with \(s=0\), so the formula follows immediately
		from Theorem~\ref{thm:cactus-formula}.
	\end{proof}

	For the two basic forest families, formula~\eqref{eq:forest-formula} gives
	\[
	\beta_{i,i+1}(\Bbbk[\NN(cK_2)])=i\binom{2c}{i+1}
	\]
	and
	\[
	\beta_{i,i+1}(\Bbbk[\NN(K_{1,m})])=\binom mi.
	\]
	
	Formula~\eqref{eq:forest-formula} has a converse: it loses no information
	about the degree multiset.

	\begin{theorem}\label{thm:forest-rigidity}
		Let \(F\) and \(F'\) be forests without isolated vertices. The Betti tables
		of their neighborhood complexes agree if and only if \(F\) and \(F'\)
		have the same degree sequence.
	\end{theorem}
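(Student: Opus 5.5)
The plan is to reduce the statement to the cactus case with \(s=0\), using Theorem~\ref{thm:cactus-rigidity} and Corollary~\ref{cor:cactus-low-degrees}. A forest without isolated vertices is a cactus graph with no cycles, so all its cycles vacuously have length four. Both results therefore apply with \(s(F)=s(F')=0\).

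For the forward direction, assume the Betti tables agree. Theorem~\ref{thm:cactus-rigidity} gives \(|V(F)|=|V(F')|\), \(c(F)=c(F')\), and \(h_d(F)=h_d(F')\) for every \(d\ge3\). Formulas \eqref{eq:cactus-h1} and \eqref{eq:cactus-h2} with \(s=0\) then read
\[
h_1=2c+\sum_{d\ge3}(d-2)h_d,\qquad h_2=n-2c-\sum_{d\ge3}(d-1)h_d .
\]
These express \(h_1\) and \(h_2\) entirely in terms of data already shown to coincide. Equivalently, the difference formulas of Corollary~\ref{cor:cactus-low-degrees} give \(h_1(F)-h_1(F')=h_2(F)-h_2(F')=0\). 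Since there are no isolated vertices, \(h_0=0\) for both forests. Hence every \(h_d\) agrees, and the degree sequences (as multisets) coincide.

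For the converse, assume \(F\) and \(F'\) have the same degree sequence. The number of vertices \(n\) is then the length of the sequence. A forest has \(n-c\) edges, so \(2(n-c)=\sum_v d_v\), and \(c\) is determined by the degree sum. The numbers \(h_d\) for \(d\ge3\) agree trivially. Theorem~\ref{thm:cactus-rigidity} therefore yields equal Betti tables. Alternatively, the right side of \eqref{eq:forest-formula} depends only on \(n\), \(c\), and the multiset \(\{d_v\}\), which gives the same conclusion directly.

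There is no serious obstacle. The only point needing care is recovering \(c\) from the degree sequence, which works because \(s=0\) forces \(|E|=n-c\) by Lemma~\ref{lem:cactus-overlaps}. The hypothesis excluding isolated vertices is what lets \(h_0=0\) be used and makes the cactus results applicable.
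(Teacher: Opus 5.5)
Your proof is correct. For the direction ``equal Betti tables \(\Rightarrow\) equal degree sequences'' it takes a different route from the paper.

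\textbf{Your route.} You specialize Theorem~\ref{thm:cactus-rigidity} and Corollary~\ref{cor:cactus-low-degrees} to \(s=0\). The Betti table gives the face numbers \(q_r\), and inverting them recovers \(h_d\) for \(d\ge3\). The vertex count and the handshake identity, with \(s\) known to vanish, then give \(h_1\) and \(h_2\).

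\textbf{The paper's route.} The paper works directly from the forest formula \eqref{eq:forest-formula}. It reads off the moments \(M_k(F)=\sum_v\binom{n-d_v}{k}\) for \(k\ge2\) from the Betti numbers. It obtains \(M_0=n\) and \(M_1=n^2-2(n-c)\) from \(n\) and \(c\); here the handshake identity reappears as the value of \(M_1\). A single binomial inversion in the variable \(n-d_v\) then recovers every degree multiplicity at once.

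\textbf{Comparison.} Your argument is shorter once the cactus results are available. It also shows clearly that \(s\) is exactly the information lost for square cacti, so pinning \(s=0\) restores full rigidity. The paper's argument needs only the explicit forest formula and Theorem~\ref{thm:terminal}. It treats low and high degrees uniformly, with no separate linear system for \(h_1,h_2\), and gives a direct closed-form reconstruction of each multiplicity from the Betti numbers.

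For the other direction, your alternative remark is exactly the paper's argument: the right side of \eqref{eq:forest-formula} depends only on \(n\), \(c\), and the degree multiset, and \(c=n-\tfrac12\sum_v d_v\).
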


	\begin{proof}
		Suppose first that \(F\) and \(F'\) have the same degree sequence. They then
		have the same number \(n\) of vertices and the same number of components,
		because a forest satisfies
		\[
		c=n-\frac12\sum_{v\in V(F)}d_v.
		\]
		Corollary~\ref{cor:forest-formula} therefore gives identical Betti tables.

		Conversely, suppose that the Betti tables agree. By
		Theorem~\ref{thm:terminal}, the terminal homological degree is \(n-1\), so the
		table determines \(n\). The terminal entry also determines the number of
		components:
		\[
		c=\frac{\beta_{n-1,n}+1}{2}.
		\]
		For \(0\le k\le n\), define
		\[
		M_k(F)=\sum_{v\in V(F)}\binom{n-d_v}{k}.
		\]
		The first two values are determined by \(n\) and \(c\):
		\begin{equation}\label{eq:first-moments}
			M_0(F)=n,
			\qquad
			M_1(F)=n^2-2(n-c).
		\end{equation}
		For \(2\le k\le n\), Corollary~\ref{cor:forest-formula}, with \(i=k-1\),
		gives
		\begin{equation}\label{eq:moment-from-betti}
			M_k(F)
			=
			(n-1)\binom nk
			+(2c-n)\binom{n-1}{k-1}
			-\beta_{k-1,k}(\Bbbk[\NN(F)]).
		\end{equation}
		Thus the Betti table determines every \(M_k(F)\).

		Now set
		\[
		h_j(F)=\bigl|\{v\in V(F):n-d_v=j\}\bigr|,
		\qquad 0\le j\le n.
		\]
		Grouping the vertices according to the value of \(n-d_v\) gives
		\[
		M_k(F)=\sum_{j=k}^{n}h_j(F)\binom jk.
		\]
		Binomial inversion yields
		\begin{equation}\label{eq:binomial-inversion}
			h_j(F)=
			\sum_{k=j}^{n}
			(-1)^{k-j}\binom kj M_k(F),
			\qquad 0\le j\le n.
		\end{equation}
		Hence the table determines every \(h_j(F)\). Since the number of vertices of
		degree \(d\) is \(h_{n-d}(F)\), it determines the full degree sequence.
	\end{proof}

	\begin{remark}
		The proof of Theorem~\ref{thm:forest-rigidity} is constructive: the terminal
		entry gives \(n\) and \(c\), equations~\eqref{eq:first-moments} and
		\eqref{eq:moment-from-betti} give the moments \(M_k\), and
		\eqref{eq:binomial-inversion} recovers the multiplicity of each degree.
	\end{remark}

	\section{Dominance complexes}\label{sec:dominance}
	
	This final section applies the preceding viewpoint to dominance complexes and
	closed neighborhood ideals.
	
	For a finite simple graph \(G\), its \emph{dominance complex} is
	\[
	\DD(G)
	=
	\left\{
	F\subseteq V(G):
	V(G)\setminus F
	\text{ is a dominating set of }G
	\right\}.
	\]
	A set \(F\subseteq V(G)\) is a nonface of \(\DD(G)\) if and only if
	\(V(G)\setminus F\) fails to dominate some vertex \(v\). Equivalently,
	\[
	N_G[v]\cap\bigl(V(G)\setminus F\bigr)=\varnothing
	\]
	for some \(v\in V(G)\), or, equivalently,
	\[
	N_G[v]\subseteq F.
	\]
	Therefore
	\begin{equation}\label{eq:dominance-closed-neighborhood}
		I_{\DD(G)}
		=
		\bigl(x_{N_G[v]}:v\in V(G)\bigr)
		=
		NI(G),
	\end{equation}
	where \(NI(G)\) is the closed neighborhood ideal introduced by Sharifan
	and Moradi \cite[Definition~2.1]{SharifanMoradi2020}. This generating set need
	not be minimal. Its minimal generators correspond to the members that are minimal
	under inclusion in
	\[
	\{N_G[v]:v\in V(G)\}.
	\]
	
	The same description also yields the relation with the neighborhood complex
	of the complement. Recall that the Alexander dual of a simplicial complex
	\(\Delta\) on \(V\) is
	\[
	\Delta^\vee=\{F\subseteq V:V\setminus F\notin\Delta\}.
	\]
	For every \(v\in V(G)\),
	\[
	N_{\overline G}(v)=V(G)\setminus N_G[v].
	\]
	Therefore, for every \(F\subseteq V(G)\),
	\begin{align*}
		F\in\NN(\overline G)^\vee
		&\Longleftrightarrow
		V(G)\setminus F\notin\NN(\overline G)\\
		&\Longleftrightarrow
		\nexists\,v\in V(G)
		\text{ such that }
		V(G)\setminus F\subseteq N_{\overline G}(v)\\
		&\Longleftrightarrow
		\nexists\,v\in V(G)
		\text{ such that }
		N_G[v]\subseteq F\\
		&\Longleftrightarrow
		F\in\DD(G).
	\end{align*}
	Consequently,
	\[
	\DD(G)=\NN(\overline G)^\vee,
	\qquad\text{equivalently}\qquad
	\DD(G)^\vee=\NN(\overline G).
	\]
	This is precisely the relation from Alexander duality proved in
	\cite[Theorem~1.1]{MatsushitaWakatsuki2025}.
	
	\begin{theorem}\label{thm:dominance-star}
		Let \(G\) be a nonempty finite simple graph. The ideal \(I_{\DD(G)}\)
		has a \(2\)-linear resolution over \(\Bbbk\) if and only if
		\[
		G\cong K_{1,m}
		\]
		for some \(m\ge1\).
	\end{theorem}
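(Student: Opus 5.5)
The plan is to work directly with the description \eqref{eq:dominance-closed-neighborhood}, namely \(I_{\DD(G)}=NI(G)=(x_{N_G[v]}:v\in V(G))\), and reduce to Fr\"oberg's theorem \cite[Theorem~1]{Froberg1990}. I will not use the Alexander duality \(\DD(G)^\vee=\NN(\overline G)\).

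\emph{Necessity.} Assume \(I_{\DD(G)}\) has a \(2\)-linear resolution. Then every minimal generator has degree two, so every inclusion-minimal member of \(\{N_G[v]\}\) has exactly two elements.

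\begin{itemize}
\item An isolated vertex \(v\) would give the degree-one generator \(x_v\). Since \(x_v\) is then a minimal generator, \(G\) has no isolated vertices.
\item A closed neighborhood \(N_G[u]\) has size two exactly when \(u\) is a leaf with neighbor \(w\), and then \(N_G[u]=\{u,w\}\).
\item Every \(N_G[v]\) contains a minimal member. Hence \(I_{\DD(G)}=I(L)\) is the edge ideal of the graph \(L\) whose edges are the pendant edges \(uw\) of \(G\). Moreover, every \(N_G[v]\) contains some pendant edge.
\item Each component of \(L\) is a star: a single pendant edge \(K_2\), or several leaves attached to a common vertex. The ideal generators involve only the vertices of \(L\), and the extra variables are free. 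So Fr\"oberg's theorem says the complement of \(L\) on \(V(L)\) must be chordal.
\item If \(L\) had two components, choose edges \(ab\) and \(cd\) in different components. Then \(a,c,b,d\) induce a \(C_4\) in \(\overline L\). Hence \(L\) is a single star with center \(w\), or a single \(K_2\), in which case either endpoint can serve as \(w\).
\end{itemize}

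The step needing the most care is showing that this forces \(G\) itself to be a star. Take any vertex \(v\). Then \(N_G[v]\) contains a pendant edge \(\{u,w\}\), where \(u\) is a leaf of \(G\) adjacent to \(w\). There are three cases.

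\begin{itemize}
\item If \(v\notin\{u,w\}\), then \(v\) would be adjacent to \(u\), and \(u\) would have degree at least two, which is impossible.
\item If \(v=u\), then \(v\) is a leaf attached to \(w\).
\item If \(v=w\), then \(v\) is the center.
\end{itemize}

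So every vertex other than \(w\) is a leaf of \(G\) adjacent to \(w\). Since \(G\) is nonempty and has no isolated vertices, \(G\cong K_{1,m}\) with \(m\ge1\).

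\emph{Sufficiency.} Let \(G=K_{1,m}\) with center \(c\) and leaves \(\ell_1,\dots,\ell_m\). Then \(N_G[\ell_j]=\{\ell_j,c\}\), and \(N_G[c]=V(G)\) contains these sets. Hence
\[
I_{\DD(G)}=(x_cx_{\ell_1},\dots,x_cx_{\ell_m}),
\]
which is the edge ideal of the star. Its complement is \(K_m\) on the leaves together with the isolated vertex \(c\), and this graph is chordal. Fr\"oberg's theorem then gives a \(2\)-linear resolution over every field \(\Bbbk\).
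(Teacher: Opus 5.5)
Your proof is correct, but it takes a different route from the paper's.

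\textbf{The paper's route.} For necessity, the paper uses the regularity bound \(\reg(S/NI(G))\ge\nu(G)\) of Chakraborty, Joseph, Roy and Singh, so a \(2\)-linear resolution forces \(\nu(G)\le1\). It then classifies graphs without isolated vertices and with matching number at most one as stars or \(K_3\). Finally it discards \(K_3\) because \(NI(K_3)=(x_1x_2x_3)\) is cubic. For sufficiency it writes \(I_{\DD(K_{1,m})}=x_c(x_{\ell_1},\dots,x_{\ell_m})\) and shifts the Koszul resolution of the leaf variables.

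\textbf{Your route.} You extract almost everything from quadratic generation alone. The inclusion-minimal closed neighborhoods must be pendant edges, so \(I_{\DD(G)}=I(L)\) with \(L\) a disjoint union of stars. The only remaining obstruction is then an induced \(2K_2\) in \(L\), which you rule out with Fr\"oberg's theorem. In fact only its easy direction is needed: an induced \(C_4\) on \(\{a,b,c,d\}\) in \(\overline L\) gives \(\beta_{2,\{a,b,c,d\}}(S/I(L))\ne0\) by Hochster's formula. You then finish with a direct combinatorial argument.

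\textbf{Comparison.} Your argument stays within tools the paper already uses, and it explains the answer: quadratic generation forces an edge ideal of stars, and linearity only adds connectivity of \(L\). The case \(K_3\) is excluded automatically. The paper's argument is shorter on the page, but it rests on a nontrivial external regularity bound and needs separate exclusions of isolated vertices and \(K_3\).

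\textbf{One small slip.} When \(L\) is a single edge, it is not true in general that either endpoint can serve as \(w\). Your three-case argument needs the partner \(u\) of \(w\) to be a leaf of \(G\); in the first case you use that \(u\) cannot acquire a second neighbor. If only one endpoint of the pendant edge is a leaf of \(G\), choosing \(w\) to be that leaf breaks this step. Since every edge of \(L\) is a pendant edge, you can always choose \(w\) so that its partner is a leaf of \(G\). With that choice the argument goes through unchanged and forces \(G\cong K_2\) in this case.
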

	
	\begin{proof}
		Assume first that \(I_{\DD(G)}=NI(G)\) has a \(2\)-linear resolution. Since
		\(NI(G)\) is a nonzero proper ideal, the exact sequence
		\[
		0\longrightarrow NI(G)\longrightarrow S\longrightarrow S/NI(G)
		\longrightarrow0
		\]
		and the linearity assumption give
		\[
		\reg(S/NI(G))=\reg(NI(G))-1=1.
		\]
		The matching number bound in
		\cite[Theorem~1.2]{ChakrabortyJosephRoySingh2025} therefore implies
		\[
		\nu(G)\le1.
		\]
		The graph \(G\) has no isolated vertices. Indeed, if \(v\) were isolated,
		then \(N_G[v]=\{v\}\), and \(x_v\) would be a minimal generator of
		\(NI(G)\) in degree one, contrary to quadratic generation.
		
		The bound \(\nu(G)\le1\) now restricts the structure of \(G\). Since \(G\)
		has no isolated vertices, it contains an edge \(uv\). Since \(\nu(G)\le1\), every edge of
		\(G\) meets \(uv\). Suppose that \(ux\) and \(vy\) are edges with
		\(x,y\notin\{u,v\}\). These edges cannot be disjoint, so \(x=y\), and
		\(u,v,x\) form a triangle.
		
		If such a triangle exists, no fourth vertex can be incident with an edge. An
		edge joining a fourth vertex to one vertex of the triangle would be disjoint
		from the opposite triangle edge, while an edge between two outside vertices
		would be disjoint from \(uv\). Since \(G\) has no isolated vertices, this
		forces \(G\cong K_3\). If no triangle exists, all edges other than \(uv\)
		must be incident with the same endpoint of \(uv\). Otherwise, an edge through
		\(u\) and an edge through \(v\) would either be disjoint or form a triangle.
		The absence of isolated vertices then implies that \(G\) is a star. Thus
		\(G\) is either a star or \(K_3\).
		
		The second possibility is impossible because
		\[
		I_{\DD(K_3)}=NI(K_3)=(x_1x_2x_3),
		\]
		which is generated in degree three. Therefore \(G\cong K_{1,m}\) for some
		\(m\ge1\).
		
		Conversely, let \(G=K_{1,m}\), with center \(c\) and leaves
		\(l_1,\ldots,l_m\). The generator associated with the center is redundant, and
		\eqref{eq:dominance-closed-neighborhood} gives
		\[
		I_{\DD(G)}
		=(x_cx_{l_1},\ldots,x_cx_{l_m})
		=x_c(x_{l_1},\ldots,x_{l_m}).
		\]
		As a graded \(S\)-module, this ideal is isomorphic to
		\((x_{l_1},\ldots,x_{l_m})(-1)\). The ideal generated by the leaf variables
		has a \(1\)-linear Koszul resolution, and multiplication by \(x_c\) shifts
		that resolution by one. Hence \(I_{\DD(G)}\) has a \(2\)-linear resolution.
	\end{proof}


\begingroup
	\small
	\begin{thebibliography}{99}
		
		\bibitem{BrandstadtDraganChepoiVoloshin1998}
		A. Brandst\"adt, F. Dragan, V. Chepoi and V. Voloshin,
		\emph{Dually chordal graphs},
		SIAM J. Discrete Math. \textbf{11} (1998), no.~3, 437--455.
		\href{https://doi.org/10.1137/S0895480193253415}{doi:\nolinkurl{10.1137/S0895480193253415}}.
		
		\bibitem{ChakrabortyJosephRoySingh2025}
		S. Chakraborty, A. P. Joseph, A. Roy and A. Singh,
		\emph{Castelnuovo--Mumford regularity of the closed neighborhood ideal of a graph},
		J. Algebraic Combin. \textbf{61} (2025), article~1.
		\href{https://doi.org/10.1007/s10801-024-01369-0}{doi:\nolinkurl{10.1007/s10801-024-01369-0}}.
		
		\bibitem{EngstromStamps2013}
		A. Engstr\"om and M. T. Stamps,
		\emph{Betti diagrams from graphs},
		Algebr. Number Theory \textbf{7} (2013), no.~7, 1725--1742.
		\href{https://doi.org/10.2140/ant.2013.7.1725}{doi:\nolinkurl{10.2140/ant.2013.7.1725}}.
		
		\bibitem{Froberg1990}
		R. Fr\"oberg,
		\emph{On Stanley--Reisner rings},
		Banach Center Publ. \textbf{26}, Part~2 (1990), 57--70.
		
		\href{https://doi.org/10.4064/-26-2-57-70}{doi:\nolinkurl{10.4064/-26-2-57-70}}.
		
		\bibitem{Froberg2022}
		R. Fr\"oberg,
		\emph{Betti numbers of fat forests and their Alexander dual},
		J. Algebraic Combin. \textbf{56} (2022), 1023--1030.
		\href{https://doi.org/10.1007/s10801-022-01143-0}{doi:\nolinkurl{10.1007/s10801-022-01143-0}}.
		
		\bibitem{Froberg2026}
		R. Fr\"oberg,
		\emph{Stanley--Reisner rings of neighborhood complexes and linear resolutions},
		J. Algebraic Combin. \textbf{63} (2026), article~56.
		\href{https://doi.org/10.1007/s10801-026-01530-x}{doi:\nolinkurl{10.1007/s10801-026-01530-x}}.
		
		\bibitem{GroshausSzwarcfiter2008}
		M. Groshaus and J. L. Szwarcfiter,
		\emph{On hereditary Helly classes of graphs},
		Discrete Math. Theor. Comput. Sci. \textbf{10} (2008), no.~1, 71--78.
		\href{https://doi.org/10.46298/dmtcs.440}{doi:\nolinkurl{10.46298/dmtcs.440}}.
		
		\bibitem{Hochster1977}
		M. Hochster,
		\emph{Cohen--Macaulay rings, combinatorics, and simplicial complexes},
		in \emph{Ring Theory II (Proc. Second Oklahoma Ring Theory Conference, Norman, 1975)},
		Lecture Notes in Pure and Applied Mathematics, vol.~26,
		Marcel Dekker, New York, 1977, 171--223.
		
		\bibitem{LeLe2019}
		H.-O. Le and V. B. Le,
		\emph{Hardness and structural results for half-squares of restricted tree convex bipartite graphs},
		Algorithmica \textbf{81} (2019), 4258--4274.
		\href{https://doi.org/10.1007/s00453-018-0440-7}{doi:\nolinkurl{10.1007/s00453-018-0440-7}}.
		
		\bibitem{Lovasz1978}
		L. Lov\'asz,
		\emph{Kneser's conjecture, chromatic number, and homotopy},
		J. Combin. Theory Ser. A \textbf{25} (1978), no.~3, 319--324.
		\href{https://doi.org/10.1016/0097-3165(78)90022-5}{doi:\nolinkurl{10.1016/0097-3165(78)90022-5}}.
		
		\bibitem{MatsushitaWakatsuki2025}
		T. Matsushita and S. Wakatsuki,
		\emph{Dominance complexes, neighborhood complexes and combinatorial Alexander duals},
		J. Combin. Theory Ser. A \textbf{211} (2025), article~105978.
		\href{https://doi.org/10.1016/j.jcta.2024.105978}{doi:\nolinkurl{10.1016/j.jcta.2024.105978}}.
		
		\bibitem{NamiqInitiallyCM}
		M. R. Namiq,
		\emph{Initially Cohen--Macaulay modules},
		New Math. Nat. Comput., published online (2026).
		\href{https://doi.org/10.1142/S1793005728500226}{doi:\nolinkurl{10.1142/S1793005728500226}}.
		
		\bibitem{Namiq2026}
		M. R. Namiq,
		\emph{Explicit Betti numbers for skeletons of chordal clique complexes and their Alexander duals}, (2026).
		\href{https://arxiv.org/abs/2603.17776}{\nolinkurl{arXiv:2603.17776}}.
		
		\bibitem{SharifanMoradi2020}
		L. Sharifan and S. Moradi,
		\emph{Closed neighborhood ideal of a graph},
		Rocky Mountain J. Math. \textbf{50} (2020), no.~3, 1097--1107.
		\href{https://doi.org/10.1216/rmj.2020.50.1097}{doi:\nolinkurl{10.1216/rmj.2020.50.1097}}.
		
	\end{thebibliography}
	\endgroup
\end{document}